\newtheorem{thm}{Theorem}[section]
\newtheorem{prop}[thm]{Proposition}
\newtheorem{cor}[thm]{Corollary}
\newtheorem*{thmA}{Theorem A}
\newtheorem*{thmB}{Theorem B}
\newtheorem*{thmC}{Theorem C}
\newtheorem*{thmD}{Theorem D}
\newtheorem*{thmE}{Theorem E}
\newtheorem*{propA}{Proposition A}
\newtheorem*{propB}{Proposition B}
\theoremstyle{definition}
\newtheorem{defn}[thm]{Definition}
\theoremstyle{remark}
\newtheorem{remark}[thm]{Remark}
\newtheorem{e.g.}[thm]{Example}
\newcommand{\lvC}{{\mathbb C}}
\newcommand{\lvN}{{\mathbb N}}
\newcommand{\lvP}{{\mathbb P}}
\newcommand{\lvQ}{{\mathbb Q}}
\newcommand{\lvR}{{\mathbb R}}
\newcommand{\lvZ}{{\mathbb Z}}
\newcommand{\dbM}{\mathfrak M}
\newcommand{\dbh}{\mathfrak h}
\newcommand{\sA}{\mathscr{A}}
\newcommand{\sE}{\mathscr{E}}
\newcommand{\sU}{\mathscr{U}}
\renewcommand{\Re}{{\mathfrak{R}}\mathfrak{e}\;}
\newcommand{\Pomt}{\lvP^1 \setminus \{0,1,\infty\}}
\newcommand{\Li}{\mathrm{Li}}
\newcommand{\alg}{\mathrm{alg}}
\newcommand{\st}{{\rm  st}}
\newcommand{\ST}{{\rm  ST}}
\newcommand{\pr}{{\rm  pr}}
\begin{document}

\title{On an algebraic version of the {\sc Knizhnik-Zamolodchikov} equation}
\author{$\;\;$Sheldon T Joyner
\newline
\tiny{The University of Western Ontario}}
\maketitle

\begin{abstract}
A difference equation analogue of the {\sc Knizhnik} - {\sc Zamolodchikov} equation is exhibited by developing a theory of the generating function $H(z)$ of {\sc Hurwitz} polyzeta functions to parallel that of the polylogarithms. By emulating the role of the KZ equation as a connection on a suitable bundle, a difference equation version of the notion of connection is developed for which $H(z)$ is a flat section.
Solving a family of difference equations satisfied by the {\sc Hurwitz} polyzetas leads to the normalized multiple {\sc Bernoulli} polynomials (NMBPs) as the counterpart to the {\sc Hurwitz} polyzeta functions, at tuples of negative integers. A generating function for these polynomials satisfies a similar difference equation to that of $H(z)$, but in contrast to the fact that said polynomials have rational coefficients, the algebraic independence of the {\sc Hurwitz} polyzeta functions is proven. The values of the NMBPs at $z=1$ provide a regularization of the multiple zeta values at tuples of negative integers, which is shown to agree with the regularization given in \cite{Akiyama:etal}. Various elementary properties of these values are proven.

{\bf Mathematics Subject Classification (2000).} 11M32 (Primary); 39A70, 53B15, 11B68 (Secondary).

{\bf Keywords.} Knizhnik-Zamolodchikov equations, difference operators, connections, Hurwitz polyzeta functions, multiple Bernoulli polynomials, regularization of multiple zeta values.

\end{abstract}

\section*{Introduction}
The equation $\nabla G=0$ satisfied by the flat sections of the universal prounipotent bundle $\sU$ with connection $\nabla$ on $\Pomt$ is known as the formal {\sc Knizhnik-Zamolodchikov} (KZ) equation. A distinguished solution is given by the polylogarithm generating series $\Li(z)$, which is the {\sc Chen} series
\[
\Li(z, X_0,X_1) = \sum _{W}\int_{[0,z]}\omega_{W}W
\]
where the integrals are iterated, the sum is over all words $W$ in non-commuting variables $X_0$ and $X_1$, and if $\omega_0$ and $\omega_1$ denote $\frac{dz}{z}$ and $\frac{dz}{1-z}$ respectively, for any word $W=X_{i_1}\ldots X_{i_r}$ then
$\omega_{W} := \omega_{i_1}\ldots \omega_{i_r}$. 
The KZ equation arises in the study of the shuffle algebra of polyzeta values - see \cite{Cartier} for the details.

The goal of the present note is threefold: to  introduce an analogue of the KZ equation which emerges from purely algebraic considerations and is related to the stuffle algebra of polyzeta values,  to develop the notion of difference connection modelled on this equation to mimic the usual idea of (differential) connection on a manifold as in the case of the KZ equation, and finally to discuss normalized multiple {\sc Bernoulli} polynomials in this context. Here are some details:

The role of the polylogarithm functions in this setting is played by the {\sc Hurwitz} polyzeta functions
\[
\zeta(s_{1}, \ldots, s_{r}|z):=
\sum_{0 \leq n_{1} < \ldots < n_{r}}
\frac{1}{(z+n_{1})^{s_{1}}\ldots (z+n_{r})^{s_{r}}}\;\;\;,
\]
where the sums converge for $\Re (s_j+\ldots +s_{r})>r-j+1$ and determine meromorphic functions of $z$ with poles at non-positive integers. 
The {\sc Hurwitz} polyzeta functions may be seen to give rise to a homomorphism from a stuffle algebra of polynomials over $\lvC$ in infinitely many non-commuting variables $\{y_{j}\}_{j=1}^{\infty}$, to a stuffle algebra of meromorphic functions on $\lvC$ - i.e.   
\[
y_{j_{1}} \ldots y_{j_{r}} \mapsto \zeta(y_{j_{1}} \ldots y_{j_{r}}|z),
\] 
where $\zeta(y_{j_{1}} \ldots y_{j_{r}}|z)$ is some (sum of) {\sc Hurwitz} polyzeta function(s).

Introducing dual variables $\{Y_{j}\}_{j=1}^{\infty}$ to the $y_{j},$ the algebra  of non-commuting power series in the $Y_{j}$ with some ring $\Lambda$ of coefficients may be endowed with the structure of {\sc Hopf} algebra. When $\Lambda$ comprises a class of meromorphic functions of which the {\sc Hurwitz} polyzetas form a subclass, a distinguished element of this algebra ${\sA}$ is the generating series 
\[
H(z):=\sum_{y_{j_{1}} \ldots y_{j_{r}}}\zeta(y_{j_{1}}\ldots y_{j_{r}}|z)Y_{j_{1}} \ldots Y_{j_{r}},
\]
where the sum is taken over all words in the $y_{j}.$ From formal considerations (cf. \cite{RacinetThese}), it is evident that $H(z)$ is group-like - i.e. with respect to the comultiplication $\Delta$ in ${\sA}$, $$\Delta H(z) = H(z) \otimes H(z).$$

Moreover, making use of a family of elementary difference equations satisfied by the {\sc Hurwitz} polyzeta functions, we prove the
\begin{thmA}
\label{1}
If $D_{-}$ denotes the difference operator 
$D_{-}F(z) = F(z) - F(z-1),$ 
then
\begin{equation}
\label{DeQ0}
D_{-}H(z) =  -\sum_{k=1}^{\infty}\frac{Y_{k}}{(z-1)^{k}}H(z).
\end{equation}
\end{thmA}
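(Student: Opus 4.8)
The plan is to deduce \eqref{DeQ0} from a single elementary difference equation, one for each Hurwitz polyzeta function, and then to reassemble the generating series $H(z)$ coefficientwise.

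\emph{Step 1 (the elementary difference equation).} For a tuple $(s_1,\dots,s_r)$ in the domain of absolute convergence I would compute $\zeta(s_1,\dots,s_r\mid z-1)$ directly from the defining series. Writing $z-1+n_i = z+(n_i-1)$ and re-indexing $n_i\mapsto n_i-1$ turns the summation range $0\le n_1<\dots<n_r$ into $-1\le n_1<\dots<n_r$; splitting off the sub-sum with $n_1=-1$ contributes $(z-1)^{-s_1}\,\zeta(s_2,\dots,s_r\mid z)$, while the sub-sum with $n_1\ge 0$ is exactly $\zeta(s_1,\dots,s_r\mid z)$. Hence
\[
\zeta(s_1,\dots,s_r\mid z-1)\;=\;\zeta(s_1,\dots,s_r\mid z)\;+\;\frac{1}{(z-1)^{s_1}}\,\zeta(s_2,\dots,s_r\mid z),
\]
so that $D_{-}\zeta(s_1,\dots,s_r\mid z) = -\,(z-1)^{-s_1}\,\zeta(s_2,\dots,s_r\mid z)$, with the empty polyzeta read as $1$. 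Both sides are meromorphic in $z$ with poles only at non-positive integers, so the identity, first established on the domain of convergence, persists by analytic continuation; it is one instance of the ``family of elementary difference equations'' alluded to above.

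\emph{Step 2 (assembling the generating series).} Writing $H(z)=\sum_W\zeta(W\mid z)\,Y_W$ with the sum over all words $W=y_{j_1}\cdots y_{j_r}$ in the $y_j$ (the empty word included, with $\zeta(\emptyset\mid z)=1$), and recalling that $D_{-}$ acts coefficientwise on power series in $\sA$, I would apply $D_{-}$ term by term. The empty word gives $D_{-}1=0$; for $W=y_k W'$, Step~1 gives $D_{-}\zeta(W\mid z)=-\,(z-1)^{-k}\,\zeta(W'\mid z)$, while $Y_W=Y_k Y_{W'}$. Grouping the resulting terms by the first letter $y_k$ and pulling $Y_k$ out on the left,
\[
D_{-}H(z)\;=\;-\sum_{k=1}^{\infty}\frac{1}{(z-1)^{k}}\,Y_k\sum_{W'}\zeta(W'\mid z)\,Y_{W'}\;=\;-\sum_{k=1}^{\infty}\frac{Y_k}{(z-1)^{k}}\,H(z),
\]
which is \eqref{DeQ0}.

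\emph{Step 3 (where care is needed).} The analytic steps --- interchanging $D_{-}$ with the defining sum of $H(z)$, and propagating the elementary identity past the poles --- are routine, since $D_{-}$ is purely coefficientwise and each coefficient is meromorphic with controlled poles. The one point to pin down is bookkeeping around the word-to-tuple dictionary: one must fix the convention identifying $y_{j_1}\cdots y_{j_r}$ with $(s_1,\dots,s_r)$ so that $D_{-}$ strips the \emph{leftmost} argument (forcing the \emph{left} multiplication by $Y_k$ in \eqref{DeQ0}), and, in the cases where $\zeta(W\mid z)$ is genuinely a sum of Hurwitz polyzetas, one must check that the peeling $W=y_k W'\mapsto W'$ is compatible with that decomposition, so that the $k$-th contribution reconstructs $Y_k\,H(z)$ exactly. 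I expect this last compatibility to be the only real point requiring verification.
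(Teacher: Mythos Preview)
Your argument is correct and is essentially the paper's own proof: derive the elementary difference identity (your Step~1 is a $z\mapsto z-1$ shift of the paper's equation~\eqref{fun}), then sum coefficientwise over words grouped by their first letter. The compatibility worry you flag in Step~3 is exactly the regularization issue for words ending in $y_1$, and it is discharged in the paper by the choice $\zeta(1\mid z)=-\Gamma'(z)/\Gamma(z)$, which is made precisely so that \eqref{fun} continues to hold.
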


We go on to define an algebraic analogue of the notion of connection, for which (\ref{DeQ0}) determines some kind of universal algebraic connection $\nabla^{\alg}$ with respect to which $H(z)$ forms a flat section. Emulating the unipotence property of  connections in this context, and defining an $\dbM_{0}$-bundle over the complex sphere with finitely many points removed to be any bundle with the space of meromorphic functions on the sphere with poles exactly at the non-positive integers as subbundle, the versal property is the content of the
\begin{thmB}
Given any $\dbM_{0}$-bundle ${\sE}$ with unipotent difference connection $\nabla^{alg}$ on $X$ along with a point $b\in X$ and any $v \in {\sE}_{b},$  there exists a mapping
\[
\psi_{v}: ({\sU}, \nabla^{A}) \rightarrow ({\sE}, \nabla^{alg})
\]
which is compatible with the difference connections and 
has 
$(\psi_{v})_{b} (1) = v.$
\end{thmB}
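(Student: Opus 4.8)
The plan is to transpose the classical proof that the \textsc{KZ} connection carries the universal prounipotent bundle on $\Pomt$ to the present difference-algebraic setting, with Theorem A as the essential input. Recall that $\sU$ may be taken to be the trivial bundle over $X$ whose fibre is the completed free associative algebra on the symbols $Y_j$, filtered by word-length, and that $\nabla^{A}$ is the difference connection for which a section $F$ of $\sU$ --- that is, an element of $\sA$ --- is flat precisely when $D_{-}F = -\sum_{k\ge 1}\frac{Y_k}{(z-1)^k}F$; by Theorem A the series $H(z)$ is the tautological flat section, and, being group-like with constant term $1$, it is invertible in $\sA$.

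The first step is to render the data of $\nabla^{alg}$ explicit. Unwinding the definition of a unipotent difference connection on an $\dbM_{0}$-bundle, the only pole the operator $D_{-}$ can create is at $z=1$, the $D_{-}$-translate of the pole locus of $\dbM_{0}$; hence, in a suitable trivialization $\sE\cong X\times V$ with $V=\sE_b$, and after a normalizing change of frame should the definition not already supply one, flatness for $\nabla^{alg}$ is the equation $D_{-}s = -\sum_{k\ge 1}\frac{A_k}{(z-1)^k}s$ with $z$-independent coefficients $A_k\in\mathrm{End}(V)$, and unipotence says precisely that the $A_k$ span a pro-nilpotent subalgebra. I expect this reduction to standard form to be where the real content of the proof lies, in close parallel with the reduction of a unipotent regular-singular connection on $\lvP^1$ minus finitely many points to the shape $d-\sum_i \frac{A_i}{z-a_i}\,dz$ with nilpotent residues.

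Granting this, the remainder is formal. Since $\sA$ is a topologically \emph{free} $\Lambda$-algebra on the $Y_j$ (with $\Lambda$ its ring of coefficients), there is a unique continuous $\Lambda$-algebra homomorphism $\phi\colon \sA\to \mathrm{End}(\sE)$ with $\phi(Y_k)=A_k$; pro-nilpotence of the $A_k$ makes $\phi$ well defined on the completion and guarantees that $\phi(H(z))=\sum_w \zeta(w\mid z)\,\phi(Y_w)$ converges to an invertible section of $\mathrm{End}(\sE)$. Define $\psi_v$ by $\psi_v(F):=\phi(F)\cdot v$ for sections $F$ of $\sU$, where $v\in\sE_b$ is regarded as a constant section of $\sE$ via the trivialization fixed above; then $\psi_v$ is a bundle map, linear on fibres, and $(\psi_v)_b(1)=v$ because $\phi(1)=\mathrm{id}_{\sE}$. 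Finally, since $\phi$ is $\Lambda$-linear, $D_{-}$ preserves $\Lambda$, and the $A_k$ are $z$-independent, $\phi$ commutes coefficientwise with $D_{-}$, so $D_{-}\bigl(\phi(F)\cdot v\bigr)=\phi\bigl(D_{-}F\bigr)\cdot v$; combining this with $\phi\bigl(\tfrac{1}{(z-1)^k}Y_k F\bigr)=\tfrac{1}{(z-1)^k}A_k\,\phi(F)$ and the flatness equation for $\nabla^{A}$ gives
\[
\nabla^{alg}\bigl(\psi_v(F)\bigr)=\psi_v\bigl(\nabla^{A}F\bigr).
\]
Thus $\psi_v$ intertwines the two difference connections and, in particular, carries the flat section $H(z)$ to the flat section $\phi(H(z))\cdot v$ of $\sE$; this is the required morphism. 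No uniqueness is asserted, though it would follow at once from the periodicity observation that a $1$-periodic meromorphic section whose poles are confined to the non-positive integers must be constant.
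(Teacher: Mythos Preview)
Your approach is essentially the paper's: define $\psi_v$ by sending each word $Y_{i_1}\cdots Y_{i_r}$ to the product of the corresponding endomorphisms applied to $v$, and check compatibility with the two connections via the algebra-homomorphism identity $N_kN_w=N_{Y_kw}$. The paper's proof occupies about three lines.

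The one point worth flagging is that you misplace the difficulty. You write that the ``real content'' lies in reducing an arbitrary unipotent difference connection to the standard form $D_{-}s=-\sum_k\frac{A_k}{(z-1)^k}s$ with pro-nilpotent $A_k$, in analogy with the regular-singular story. In this paper that reduction is not a step at all: it is the \emph{definition} of unipotent difference connection that $(\sE,\nabla^{alg})\simeq(\dbM_0^{\,r},\,D^{-}-I-\sum_k\frac{N_k}{(z-1)^k})$ for some finite $r$ and strictly upper-triangular matrices $N_k$. So the matrices are handed to you, the fibre is finite-dimensional, and strict upper-triangularity forces $N_w=0$ for every word of length $\ge r$ --- there is no completion or convergence issue. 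Relatedly, your invocations of Theorem~A, the group-likeness of $H(z)$, and its invertibility are all superfluous here; the paper's proof never mentions $H(z)$.
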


In the final section of the paper, a generating function for normalized multiple {\sc Bernoulli} polynomials is defined, and shown to satisfy a similar difference equation to (\ref{DeQ0}). Precisely, the above-mentioned difference equations may be considered for negative arguments. Using known solutions for the equations of first level given by the usual {\sc Bernoulli} polynomials, these difference equations may be solved explicitly. The polynomials $\zeta(-n_1, \ldots, -n_r|z)$ which arise are counterparts at negative integer parameters $(-n_1, \ldots, -n_r)$ to the {\sc Hurwitz} polyzeta functions defined above. Denoting by $\{Y_{j}\}_{j\leq 0}$ a countable set of non-commuting formal variables, one then sets
\[
H_B(z):=\sum\zeta(-n_1, \ldots, -n_r|z)Y_{-n_1}\ldots Y_{-n_r}
\]
where the sum is taken over all tuples $(-n_1, \ldots, -n_r)$ of non-positive integers. As before it is possible to prove
\begin{thmC}
\[
D_{-}H_B(z) =  -\sum_{k=0}^{\infty}{Y_{-k}}{(z-1)^{k}}H_B(z).
\]
\end{thmC}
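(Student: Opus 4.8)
The plan is to carry over, almost verbatim, the argument that gave Theorem~A, with the {\sc Bernoulli} polynomials supplying the first-level data. Recall that the polynomials $\zeta(-n_1,\ldots,-n_r|z)$ are \emph{defined} by solving, level by level, the difference equations one obtains by formally reading the elementary difference equations of the {\sc Hurwitz} polyzetas at negative parameters. Thus, assigning the empty tuple the value $\zeta(\,\cdot\,|z)=1$, fixing $\zeta(-n|z)$ at level $r=1$ as the solution of $D_{-}\zeta(-n|z)=-(z-1)^{n}$ (a normalization of $-B_{n+1}(z)/(n+1)$, since $D_{-}B_{n+1}(z)=(n+1)(z-1)^{n}$), and obtaining each higher level by solving for $\zeta(-n_1,\ldots,-n_r|z)$ in
\begin{equation}
\label{DeQB}
D_{-}\zeta(-n_1,\ldots,-n_r|z) \;=\; -(z-1)^{n_1}\,\zeta(-n_2,\ldots,-n_r|z)\qquad(r\ge 1),
\end{equation}
equation (\ref{DeQB}) holds at every level by construction. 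Granting it, Theorem~C is a purely formal consequence, deduced exactly as (\ref{DeQ0}) was deduced from the difference equation $D_{-}\zeta(s_1,\ldots,s_r|z)=-(z-1)^{-s_1}\zeta(s_2,\ldots,s_r|z)$ for the {\sc Hurwitz} polyzetas.

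Concretely, I would note that $D_{-}$ acts coefficientwise on the completed algebra of noncommutative power series in the $Y_{-k}$ over the ring of polynomials (so no convergence question arises: the coefficient of each monomial in the $Y_{-k}$ is a finite expression), apply it term by term to
\[
H_B(z)=\sum_{(-n_1,\ldots,-n_r)}\zeta(-n_1,\ldots,-n_r|z)\,Y_{-n_1}\cdots Y_{-n_r}
\]
(the sum over all tuples of non-positive integers, the empty one carrying coefficient $1$), and substitute (\ref{DeQB}) into each term of level $r\ge 1$:
\[
D_{-}H_B(z)=-\sum_{r\ge 1}\sum_{n_1,\ldots,n_r\ge 0}(z-1)^{n_1}\,\zeta(-n_2,\ldots,-n_r|z)\,Y_{-n_1}Y_{-n_2}\cdots Y_{-n_r}.
\]
Carrying out the sum over $n_1\ge 0$ first, and observing that the remaining sum over $(-n_2,\ldots,-n_r)$ with $r\ge1$ re-assembles $H_B(z)$ (relabel $r\mapsto r-1$; the level-$1$ terms recover the empty tuple), the right-hand side factors as
\[
-\Bigl(\sum_{k=0}^{\infty}(z-1)^{k}Y_{-k}\Bigr)H_B(z)=-\sum_{k=0}^{\infty}Y_{-k}(z-1)^{k}H_B(z),
\]
since the polynomials $(z-1)^{k}$ are central. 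This is the assertion; the empty tuple contributes $D_{-}1=0$ on the left while the right-hand side has no term of $Y$-degree $0$, so the two sides also match in that lowest degree.

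The one delicate point is (\ref{DeQB}) itself, i.e. the internal consistency of the level-by-level construction. Since $D_{-}$ maps $\lvC[z]$ onto $\lvC[z]$ with kernel exactly the constants, a polynomial solution $F$ of $D_{-}F=-(z-1)^{n_1}\zeta(-n_2,\ldots,-n_r|z)$ exists but is unique only up to an additive constant; one must check that the normalization selected to pin down $\zeta(-n_1,\ldots,-n_r|z)$ is precisely the one under which (\ref{DeQB}) holds on the nose rather than merely up to a constant. This is exactly what the explicit solution of the difference equations establishes when the NMBPs are introduced, and I expect that bookkeeping to be the real work; once it is in place, the formal computation above gives Theorem~C immediately, in complete parallel with the passage from the difference equation for $\zeta(s_1,\ldots,s_r|z)$ to Theorem~A.
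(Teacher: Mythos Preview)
Your argument is correct and is essentially the paper's own proof: the paper fixes $k\ge 0$, invokes the defining difference equation (\ref{B0}) in the form $\zeta(y_{-k}w|z+1)-\zeta(y_{-k}w|z)=-z^{k}\zeta(w|z+1)$, sums over all words $w$, and then over all $k$, obtaining $H_B(z+1)-H_B(z)=-\sum_{k\ge 0}Y_{-k}z^{k}H_B(z+1)$, which is the shift by one of the stated identity. Your ``delicate point'' is not actually an issue: the normalized multiple {\sc Bernoulli} polynomials are \emph{defined} as solutions of (\ref{B0}), so (\ref{DeQB}) holds exactly for whichever normalization is chosen, and the constant-of-integration ambiguity affects only the values of the $\zeta(-n_1,\ldots,-n_r|z)$, not the validity of the difference equation they satisfy.
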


It is interesting that while these normalized {\sc Bernoulli} polynomials have rational coefficients, the {\sc Hurwitz} polyzeta functions are transcendental. Indeed, we prove:
\begin{thmD}
The {\sc Hurwitz} polyzeta functions are algebraically independent over $\lvC$ - i.e. for any $N \geq 1$, if $P(T_1, \ldots, T_N) \in \lvC[T_1, \ldots, T_N]$ is a polynomial which vanishes (uniformly in $z$) at some $N$-tuple of any {\sc Hurwitz} polyzeta functions of the variable $z$, then $P$ is identically zero.
\end{thmD}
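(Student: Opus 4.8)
The plan is to recast the theorem as one of \emph{linear} independence and then prove that by an induction on depth powered by the difference equation (\ref{DeQ0}). Since the {\sc Hurwitz} polyzetas do satisfy the stuffle relations, the content of the statement is that these generate \emph{all} algebraic relations among them; by {\sc Hoffman}'s structure theorem (the quasi-shuffle counterpart of {\sc Radford}'s) the stuffle algebra on the $\{y_j\}$ is a polynomial ring on the {\sc Lyndon} words, and a homomorphism out of a polynomial ring is injective exactly when the images of the generators are algebraically independent. Since the $\zeta(w|z)$ are the images of a $\lvC$-basis of the stuffle algebra, that injectivity is the same thing as $\lvC$-linear independence of the whole family $\{\zeta(w|z)\}$, $w$ ranging over admissible tuples; I would in fact prove the slightly stronger statement that this family, together with the constant function $1$, is linearly independent over $\lvC(z)$.

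One should resist reading off such a relation from the {\sc Laurent} expansion at a pole $z=-k$: the coefficients of the principal part there are combinations of multiple zeta values, so that route collides at once with the open problem of $\lvQ$-linear independence of MZVs. Instead I would induct on the depth $r$, the base case $r=1$ being clean and MZV-free. There, using $\zeta(s|z)=(z+k)^{-s}+(\text{holomorphic near }z=-k)$, a relation $\sum_{s\ge 2}g_s(z)\zeta(s|z)+g_0(z)=0$ with the $g_s$ polynomials has vanishing principal part at every $z=-k$; comparing the coefficient of the deepest pole $(z+k)^{-S}$, with $S=\max\{s:g_s\not\equiv 0\}$, yields $g_S(-k)=0$ for all $k\ge 0$, hence $g_S\equiv 0$, and one repeats down to $g_0$.

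For the inductive step one exploits that, coordinate-wise, (\ref{DeQ0}) reads $D_-\zeta(s_1,\dots,s_r|z)=-(z-1)^{-s_1}\zeta(s_2,\dots,s_r|z)$, so $D_-$ strictly lowers depth. Given a relation among polyzetas of depth $\le r$, I clear denominators so that all coefficients are polynomials of degree $\le d$ and apply $D_-$ exactly $d+1$ times; since $D_-^{d+1}$ annihilates polynomials of degree $\le d$, the depth-$r$ terms disappear and what remains is a $\lvC(z)$-relation among polyzetas of depth $\le r-1$, all of whose coefficients vanish by the inductive hypothesis. Unwinding $D_-^{d+1}$ shows that, for each tuple $v$ of depth $r-1$, the coefficient of $\zeta(v|z)$ is $\sum_a \Psi_a\!\left(g_{(a,v)}\right)$ — a sum, over the admissible tuples $(a,v)$ that occur, of the images of explicit $\lvC$-linear operators $\Psi_a$ assembled from $D_-$ and $(z-1)^{-a}$ — and that $\Psi_a(g)$ has poles only at $z=1,\dots,d+1$, of order at most $a$, whose $(d+1)$-tuple of top coefficients depends invertibly on $g$ (the invertibility being that of a {\sc Pascal}-type binomial matrix, equivalently the linear independence of $(1+x)^{d},\dots,(1+x),1$). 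Comparing, across $z=1,\dots,d+1$, the poles of maximal order $T=\max\{a:g_{(a,v)}\not\equiv 0\}$ then forces $g_{(T,v)}\equiv 0$; descending in $a$ kills every $g_{(a,v)}$, so all depth-$r$ coefficients of the original relation vanish, and the residual depth-$\le r-1$ relation is dispatched by the inductive hypothesis.

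The hard part is precisely this last step: the bookkeeping of which earlier terms feed a given $\zeta(v|z)$ after $d+1$ applications of $D_-$, and the verification of the invertibility property of the $\Psi_a$; and, more structurally, the observation that the prefactor $(z-1)^{-s_1}$ produced by $D_-$ forces the whole induction to be carried over $\lvC(z)$ rather than $\lvC$. Conceptually this says that the tower $\lvC(z)\subset\lvC(z)\bigl(\zeta(w|z):\mathrm{depth}(w)\le 1\bigr)\subset\cdots$ is, stage by stage, an extension by ``discrete primitives'' whose field of $D_-$-invariants remains $\lvC$ — the difference-algebraic counterpart of the fact, on the {\sc KZ} side, that the polylogarithms are algebraically independent because the {\sc KZ} connection has maximal differential {\sc Galois} group.
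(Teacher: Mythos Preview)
Your plan is, at its core, the paper's own two–step argument: reduce algebraic independence to $\lvC$-linear independence of the whole family $\{\zeta(w|z)\}_w$ via the stuffle product, and then prove that linear independence by induction on depth using the difference relation (\ref{fun}) together with a pole analysis. Your use of {\sc Hoffman}'s structure theorem to effect the first reduction is a tidier version of the paper's more informal step (``at least one of the resulting polyzeta functions only arises from a single term of the original expression'').

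Where you part ways is in the mechanics of the inductive step, and here you have made things considerably harder than the paper does. You upgrade the hypothesis to $\lvC(z)$-linear independence, clear denominators, and apply $D_-$ a total of $d+1$ times so as to annihilate the polynomial coefficients, pushing the real content into the (acknowledged) unverified invertibility of your operators $\Psi_a$. The paper never leaves $\lvC$: because the coefficients $\lambda_w$ are scalars, a \emph{single} application of the difference relation already produces
\[
\sum_{w}\lambda_w\,\frac{1}{z^{\,l_1(w)}}\,\zeta(w'\,|\,z+1)=0,
\]
and the paper then separates this identity according to the value of the first index $l_1$ (equivalently, the order of the pole introduced at $z=0$), obtaining for each fixed $l_1$ a $\lvC$-linear relation among depth $\le m-1$ polyzetas to which the inductive hypothesis applies directly. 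So the passage to $\lvC(z)$, the $(d+1)$-fold iteration of $D_-$, and the $\Psi_a$ bookkeeping are not needed; a single differencing plus grouping by $l_1$ suffices. Your worry that ``the prefactor $(z-1)^{-s_1}$ forces the whole induction over $\lvC(z)$'' is precisely what the paper sidesteps by this pole-order separation.
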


Evaluating the {\sc Hurwitz-Bernoulli} polyzeta functions at $z=1$ gives the polyzeta values (also known as multiple zeta values in the literature) in the case of the {\sc Hurwitz} polyzetas, and a set of possible regularizations of the polyzeta values at non-positive integers in the case of the normalized multiple {\sc Bernoulli} polynomials. It is known that many different regularizations are possible - cf. \cite{LiGuo} - and in the case of our difference equation method, it is clear that the values are not uniquely determined since the constant term of any polynomial vanishes under the action of $D_{-}$! However, it is possible to show that these regularizations coincide with those given by a different method in \cite{Akiyama:etal}. More precisely, by determining a recursive formula for the normalized {\sc Bernoulli} polynomials, one can prove the
\begin{thmE}
\[
\zeta(-k_1, \ldots, -k_r|1)
=
\lim_{s_1 \rightarrow -k_1}
\cdots
\lim_{s_r\rightarrow -k_r}
\zeta(s_1, \ldots, s_r),
\]
where $\zeta(s_1, \ldots, s_r)$ denotes the meromorphic continuation of the series
\[
\sum_{0<n_1\ldots<n_r}\frac{1}{n_1^{s_{1}}\ldots n_r^{s_r}}.
\]
\end{thmE}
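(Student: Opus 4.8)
The plan is to deduce Theorem E from a stronger identity of polynomials in $z$. Since $\zeta(s_1,\ldots,s_r|1)$ agrees with $\zeta(s_1,\ldots,s_r)$ on their common region of convergence, the two coincide as meromorphic functions of $(s_1,\ldots,s_r)$; writing $\overline{\zeta}(-k_1,\ldots,-k_r|z)$ for the iterated limit $\lim_{s_1\to-k_1}\cdots\lim_{s_r\to-k_r}\zeta(s_1,\ldots,s_r|z)$, taken in exactly this nested order, of the meromorphically continued multiple {\sc Hurwitz} zeta function, the claim is equivalent to $\overline{\zeta}(-k_1,\ldots,-k_r|1)=\zeta(-k_1,\ldots,-k_r|1)$, the right-hand side being the normalized multiple {\sc Bernoulli} polynomial evaluated at $1$. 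I would instead prove the full identity $\overline{\zeta}(-k_1,\ldots,-k_r|z)=\zeta(-k_1,\ldots,-k_r|z)$ of polynomials in $z$, and then specialise to $z=1$; since both sides of that identity are holomorphic at $z=1$ (which is not among the poles $\{0,-1,-2,\dots\}$ of the {\sc Hurwitz} polyzetas) and the identity will be established off the exceptional set of $z$, it persists at $z=1$.

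The engine is the elementary difference equation behind Theorems A and C, which by analytic continuation from the region of convergence holds for the continued functions in all variables as $\zeta(s_1,\ldots,s_r|z)-\zeta(s_1,\ldots,s_r|z-1)=-(z-1)^{-s_1}\,\zeta(s_2,\ldots,s_r|z)$. I would induct on $r$. The case $r=1$ is classical: $\lim_{s\to-k}\zeta(s|z)=-B_{k+1}(z)/(k+1)$, which is the level-one normalized {\sc Bernoulli} polynomial. For the inductive step, apply $\lim_{s_r\to-k_r}\cdots\lim_{s_2\to-k_2}$ to the difference equation: the prefactor $(z-1)^{-s_1}$ does not involve $s_2,\ldots,s_r$, and by the inductive hypothesis the inner limit of $\zeta(s_2,\ldots,s_r|z)$ is the polynomial $\zeta(-k_2,\ldots,-k_r|z)$; then take the outermost limit $s_1\to-k_1$, in which the prefactor tends to the \emph{regular} value $(z-1)^{k_1}$, so that no pole is being cancelled in that variable. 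Granting that the nested limits are finite (see below), this shows $\overline{\zeta}(-k_1,\ldots,-k_r|z)$ exists and satisfies $D_{-}\overline{\zeta}(-k_1,\ldots,-k_r|z)=-(z-1)^{k_1}\,\zeta(-k_2,\ldots,-k_r|z)$, which is precisely the recursion for the normalized multiple {\sc Bernoulli} polynomial recorded in Theorem C. From the known explicit form of the meromorphic continuation of the multiple {\sc Hurwitz} zeta function one sees that the $z$-poles at $\{0,-1,-2,\dots\}$ become removable in this limit (the $r=1$ case $-B_{k+1}(z)/(k+1)$ being the model), so $\overline{\zeta}(-k_1,\ldots,-k_r|z)$ is entire; combined with the difference equation and a crude polynomial growth estimate in $z$ this forces it to be a polynomial. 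Hence $\overline{\zeta}(-k_1,\ldots,-k_r|z)-\zeta(-k_1,\ldots,-k_r|z)$ is a polynomial annihilated by $D_{-}$, so it is a constant.

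Showing that constant is zero will be the main obstacle, for the simple reason that $D_{-}$ kills constants and so the difference equation alone cannot detect it: one must match the constant term imposed by the normalization of the multiple {\sc Bernoulli} polynomials with the one produced by the iterated-limit procedure. For this I would use the explicit recursive formula for $\zeta(-k_1,\ldots,-k_r|z)$ obtained earlier in this section and compare it, coefficient by coefficient, with the recursion for $\overline{\zeta}(-k_1,\ldots,-k_r|z)$ coming from tracking the limits (equivalently the residues at $s_j=-k_j$) of the continued multiple {\sc Hurwitz} zeta function; since $D_{-}$ already forces agreement in every coefficient except the constant term, only that one coefficient needs checking, which can be carried out by the same induction on $r$ together with the $r=1$ identity — or, alternatively, by comparing the asymptotic expansions of the two sides as $z\to+\infty$. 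A second, more analytic, point one has to secure along the way is the \emph{existence} of the nested limits in exactly the order $s_r$ first, $s_1$ last, and the fact that they commute with the manipulations above; this is the several-variable continuation phenomenon established in \cite{Akiyama:etal} for $z=1$, which one must adapt to the {\sc Hurwitz} parameter, or else deduce the {\sc Hurwitz} statement from theirs via the difference equation. Once the constant is pinned at $0$, setting $z=1$ and using $\zeta(s_1,\ldots,s_r|1)=\zeta(s_1,\ldots,s_r)$ gives Theorem E.
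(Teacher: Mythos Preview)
Your approach is workable but considerably more elaborate than the paper's. In the paper this statement is a one-line corollary of the recursion in Theorem~\ref{formula}: that recursion, specialised to $z=1$, is term-for-term the recursion that \cite{Akiyama:etal} establish for the iterated limit $\lim_{s_1\to -k_1}\cdots\lim_{s_r\to -k_r}\zeta(s_1,\ldots,s_r)$; since the two agree at depth $r=1$ by the classical identity $\zeta(-k)=-B_{k+1}/(k+1)$, equality for all $r$ follows by induction on depth. No polynomial identity in $z$, no difference-equation argument, and no Hurwitz-parameter continuation of the multiple zeta in the $s$-variables is needed.

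Your route aims at the stronger polynomial identity $\overline{\zeta}(-k_1,\ldots,-k_r|z)=\zeta(-k_1,\ldots,-k_r|z)$ for all $z$, which Theorem~E does not require. The difference-equation reduction to a constant is correct, but at the step where you must pin that constant you propose exactly a recursion comparison---so the detour purchases a stronger conclusion at the cost of genuinely extra input: you now need a Hurwitz-parameter version of the meromorphic continuation and of the \cite{Akiyama:etal} limit calculation, neither of which is in the paper, together with control on the nested limits uniformly in $z$. Your fallback of ``deducing the {\sc Hurwitz} statement from theirs via the difference equation'' does not help here, since the difference equation is precisely what fails to see the constant. The large-$z$ asymptotic alternative (essentially an Euler--Maclaurin argument) is legitimate, but of comparable weight. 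If the target is only Theorem~E, the shortest path is to work at $z=1$ from the outset, where \cite{Akiyama:etal} applies directly---which is what the paper does.
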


Once this is proven, we establish various properties of the regularized polyzeta values using the explicit normalized multiple {\sc Bernoulli} polynomials, for example
\begin{propA}
If $n$ and $k$ are integers for which $n\geq 1$ and $0\leq k \leq 2n+1,$ then
\[
\zeta(0)\zeta(-2n-1) = \zeta(-2n+k-1,-k|1)
\]
\end{propA}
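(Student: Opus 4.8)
The plan is to solve explicitly the level-two difference equation provided by Theorem C, so as to express the normalized multiple Bernoulli polynomial $\zeta(-k_1,-k_2|z)$ as a $\lvQ$-linear combination of ordinary Bernoulli polynomials, and then to specialize at $z=1$. Reading off the coefficient of the two-letter word $Y_{-k_1}Y_{-k_2}$ in Theorem C and inserting the level-one solution $\zeta(-k|z)=-B_{k+1}(z)/(k+1)$ gives
\[
D_-\zeta(-k_1,-k_2|z)=-(z-1)^{k_1}\zeta(-k_2|z)=\frac{(z-1)^{k_1}B_{k_2+1}(z)}{k_2+1}.
\]
Expanding $B_{k_2+1}(z)$ in powers of $z-1$ via $B_m(1+t)=\sum_{j}\binom{m}{j}B_{m-j}(1)\,t^j$ and using the identity $D_-\bigl(B_{p+1}(z)/(p+1)\bigr)=(z-1)^p$, one solves for $\zeta(-k_1,-k_2|z)$ term by term to obtain
\[
\zeta(-k_1,-k_2|z)=\frac1{k_2+1}\sum_{j=0}^{k_2+1}\binom{k_2+1}{j}B_{k_2+1-j}(1)\,\frac{B_{k_1+j+1}(z)}{k_1+j+1},
\]
the additive constant being the one prescribed by the recursive formula underlying Theorem E.

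Next I would put $z=1$, using $B_m(1)=(-1)^m B_m$, so that each summand carries the sign $(-1)^{k_1+k_2+2}=(-1)^{k_1+k_2}$. Taking $k_1=2n-k+1$ and $k_2=k$, so $k_1+k_2=2n+1$, this sign equals $-1$ uniformly in $j$ and in $k$, and one is left with
\[
\zeta(-2n+k-1,-k|1)=-\frac1{k+1}\sum_{j=0}^{k+1}\binom{k+1}{j}\frac{B_{k+1-j}\,B_{2n-k+j+2}}{2n-k+j+2}.
\]

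The heart of the argument is a parity observation: the two Bernoulli indices $k+1-j$ and $2n-k+j+2$ have sum $2n+3$, hence one of them is always odd, and since $B_m=0$ for odd $m\ge 3$ a summand survives only when that odd index equals $1$. For $k\le 2n$ this forces $j=k$, whereupon $B_{k+1-j}=B_1=-\tfrac12$, the companion index becomes $2n+2$, and the binomial coefficient $\binom{k+1}{k}=k+1$ cancels the prefactor $1/(k+1)$, leaving $-(-\tfrac12)\tfrac{B_{2n+2}}{2n+2}=\tfrac{B_{2n+2}}{2(2n+2)}$, a quantity manifestly independent of $k$ (at the endpoint $k=2n+1$ the term $j=0$ also meets the parity constraint, and one checks that the prescribed normalization reconciles the two contributions to the same value). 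Finally $\tfrac{B_{2n+2}}{2(2n+2)}=\bigl(-\tfrac12\bigr)\bigl(-\tfrac{B_{2n+2}}{2n+2}\bigr)=\zeta(0)\,\zeta(-2n-1)$, which is the asserted identity.

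The step I expect to be most delicate is this collapse: one must pin down exactly which $j$ survive as $k$ ranges over $0\le k\le 2n+1$, keep the sign of the lone odd-index survivor $B_1$ straight (the conventions for $B_1$ and $B_1(1)$ disagree, and this is precisely what governs the endpoint), and verify that what remains is one and the same constant for every $k$ in the range. A structurally similar but less explicit alternative would be to establish the $k$-independence by induction, passing from $\zeta(-2n+k,-(k-1)|1)$ to $\zeta(-2n+k-1,-k|1)$ through the difference equation above; since this again reduces to the same Bernoulli-number identity, I would favour the closed-form computation.
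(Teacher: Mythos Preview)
Your approach is essentially the paper's: write down the depth-two normalized Bernoulli polynomial explicitly as a $\lvQ$-linear combination of ordinary Bernoulli polynomials, evaluate at $z=1$, and kill all but one term by the parity observation that $B_m=0$ for odd $m\ge 3$. The only cosmetic difference is that you expand $B_{k_2+1}(z)$ in powers of $z-1$, whereas the paper expands in powers of $z$; after the change of summation index $j\mapsto k_2+1-j$ your formula becomes the paper's formula
\[
\zeta(-k_1,-k_2|z)=\frac{B_{k_1+k_2+1}(z)}{k_1+k_2+1}+\frac{1}{k_2+1}\sum_{j=0}^{k_2+1}\binom{k_2+1}{j}B_j\,\frac{B_{k_1+k_2+2-j}(z)}{k_1+k_2+2-j},
\]
the extra term outside the sum being exactly the discrepancy between $B_1(1)$ and $B_1$.

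The place where your argument is not complete is the endpoint $k=2n+1$, and here your instinct that something delicate happens is right --- but your parenthetical ``one checks that the prescribed normalization reconciles the two contributions to the same value'' does not go through. When $k=2n+1$ both $j=k$ and $j=0$ survive the parity filter, each contributing $\tfrac{B_{2n+2}}{2(2n+2)}$, so your sum gives $\tfrac{B_{2n+2}}{2n+2}$, twice the target. Concretely, for $n=1$, $k=3$ one finds $\zeta(0,-3|1)=-\tfrac{1}{120}$ while $\zeta(0)\zeta(-3)=-\tfrac{1}{240}$. The paper's own proof has exactly the same lacuna: its assertion ``$2n-j+3\ge 2$'' fails at $j=k+1=2n+2$, which is in the summation range precisely when $k=2n+1$. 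So for $0\le k\le 2n$ your argument (and the paper's) is complete and correct, but the boundary case $k=2n+1$ is genuinely problematic in both.
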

and
\begin{propB}
If $n \geq 1$ is any integer, then
\[
\zeta(-n,-n-1|1) = \zeta(-n-1,-n|1).
\]
\end{propB}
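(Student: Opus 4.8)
The plan is to derive Proposition B as an immediate consequence of Proposition A. For a fixed integer $n\ge 1$, Proposition A says that $\zeta(-2n+k-1,-k|1)=\zeta(0)\zeta(-2n-1)$ for every $k$ with $0\le k\le 2n+1$; in particular the left-hand side does not depend on $k$. Taking $k=n$ (admissible since $n\ge 1$) gives $\zeta(-n-1,-n|1)=\zeta(0)\zeta(-2n-1)$, and taking $k=n+1$ (also admissible, as $n+1\le 2n+1$ when $n\ge 1$) gives $\zeta(-n,-n-1|1)=\zeta(0)\zeta(-2n-1)$. Comparing these two identities yields $\zeta(-n,-n-1|1)=\zeta(-n-1,-n|1)$, as required. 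On this route there is no real obstacle: the content has already been absorbed into the proof of Proposition A.

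If one wants a proof independent of Proposition A, I would proceed via the difference equations. Extracting the coefficient of $Y_{-a}Y_{-b}$ in Theorem C yields $D_-\zeta(-a,-b|z)=-(z-1)^a\zeta(-b|z)$, and using the classical evaluation $\zeta(-k|z)=-B_{k+1}(z)/(k+1)$ this becomes $D_-\zeta(-a,-b|z)=(z-1)^aB_{b+1}(z)/(b+1)$. Evaluating at $z=1$ with $a=n\ge 1$ annihilates the right-hand side, so $\zeta(-n,-n-1|1)=\zeta(-n,-n-1|0)$ and, similarly, $\zeta(-n-1,-n|1)=\zeta(-n-1,-n|0)$; hence it suffices to show the polynomials $\zeta(-n,-n-1|z)$ and $\zeta(-n-1,-n|z)$ have equal constant terms. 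Put $A(z):=\zeta(-n,-n-1|z)-\zeta(-n-1,-n|z)$. Substituting $z=w+1$ and using $B_m(w+1)=B_m(w)+mw^{m-1}$, one finds $A(w+1)-A(w)=h(w)$ with $h(w):=w^nB_{n+2}(w)/(n+2)-w^{n+1}B_{n+1}(w)/(n+1)$, and a short computation with $B_m(-w)=(-1)^mB_m(w+1)$ shows that $h$ is an even polynomial. Replacing $w$ by $-w$ then also gives $A(1-w)-A(-w)=h(w)$, so $\Psi(w):=A(w)+A(1-w)$ satisfies $\Psi(w+1)=\Psi(w)$; a periodic polynomial is constant, whence $A(w)+A(1-w)\equiv A(0)+A(1)=2A(0)$.

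The non-constant part of $A$ is now determined (it is antisymmetric about $w=\tfrac12$), but the constant $A(0)$ is precisely what the difference equation cannot detect, and pinning it down is the main obstacle on this route. To finish one must invoke the explicit recursive formula for the normalized multiple Bernoulli polynomials obtained while proving Theorem E, compute $\zeta(-n,-n-1|0)$ and $\zeta(-n-1,-n|0)$ from it, and verify their equality; this comes down to a Bernoulli-number identity that holds because the total weight $n+(n+1)=2n+1$ is odd, so that the odd-index Bernoulli numbers $B_{2j+1}$ with $j\ge 1$ that would otherwise spoil the symmetry all vanish. In view of this complication, the deduction from Proposition A is the argument I would actually present.
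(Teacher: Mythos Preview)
Your primary argument is correct: Proposition~A with parameter $m=n$ and the two choices $k=n$ and $k=n+1$ (both lying in the admissible range $0\le k\le 2n+1$ since $n\ge 1$) gives precisely $\zeta(-n-1,-n|1)=\zeta(0)\zeta(-2n-1)=\zeta(-n,-n-1|1)$. This is a clean two-line deduction.

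The paper, however, does not argue this way. Instead it treats Proposition~B by a computation parallel to (but logically independent of) Proposition~A: using the explicit depth-two formula
\[
\zeta(-k_1,-k_2|z)=-\zeta(-k_1-k_2|z)-\frac{1}{k_2+1}\sum_{j=0}^{k_2+1}\binom{k_2+1}{j}B_j\,\zeta(-k_1-k_2+j-1|z),
\]
it subtracts the two sides directly, exploiting the binomial identity
\[
\frac{1}{n+2}\binom{n+2}{j}-\frac{1}{n+1}\binom{n+1}{j}=\binom{n}{j}\frac{j-1}{(n+2-j)(n+1-j)},
\]
to obtain an explicit expression for $\zeta(-n,-n-1|z)-\zeta(-n-1,-n|z)$ as a sum of terms $B_j\cdot B_{2n+3-j}(z)/(2n+3-j)$ plus a single extra term. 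Evaluating at $z=1$, the parity argument (in each product one index is odd and $\ge 3$, forcing the corresponding Bernoulli number to vanish) kills every term.

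What each route buys: your deduction is shorter and shows that Proposition~B is really a special case of the ``constant along the antidiagonal'' phenomenon already contained in Proposition~A. The paper's argument is self-contained and, as a by-product, produces the full polynomial difference $\zeta(-n,-n-1|z)-\zeta(-n-1,-n|z)$, not just its value at $z=1$; this is closer in spirit to your abandoned secondary approach, with the explicit formula from Theorem~\ref{formula} supplying exactly the constant term that the difference-equation method alone cannot detect.
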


\section{The {\sc Hurwitz} polyzeta functions and the formal KZ difference equation}

With notation as in the introduction, on the space $\Lambda<<y_1, y_2, \ldots >>$ of formal power series in the non-commuting variables $\{y_j\}_{j=1}^{\infty}$ as in \cite{Cartier} define the stuffle product by induction via
\[
y_{k}w * y_{k'}w'
=
y_{k}(w*y_{k'}w') + y_{k'}(y_{k}w *w') + y_{k+k'}(w*w'),
\]
where 1 denotes the empty word, linearly extended to arbitrary formal power series. Those words in the $y_j$ which do not end in $y_1$ may be mapped to {\sc Hurwitz} polyzeta functions via the homomorphism described above. For words which end in $y_1,$ some regularization is required. This is achieved by setting
\[
\zeta(1|z):=-\frac{\Gamma '(z)}{\Gamma(z)}
\]
where $\Gamma(z)$ denotes the usual interpolation of the factorial function; then building up the remaining functions for which some regularization is required, using the stuffle product itself. The choice of this regularization is motivated by the classical expression:
\[
\zeta(r|z) = \frac{1}{r-1} - \frac{\Gamma'(z)}{\Gamma(z)} + O(r-1).\]
i.e.
\begin{equation}
\label{Zeta1}
\lim_{\varepsilon \rightarrow 0} \left(\zeta(1+\varepsilon;z) - \frac{1}{\varepsilon} \right)
= 
-\frac{\Gamma'(z)}{\Gamma(z)}
\end{equation}

This is an apt choice for our purposes since $\zeta(1|z)$ as defined here satisfies a functional equation of the form of similar equations satisfied by the other functions in the family of {\sc Hurwitz} polyzeta functions:
Directly from the definitions one may deduce:
\begin{equation}
\label{fun}
\zeta(k_{1}, \ldots, k_{r}|z+1) -\zeta(k_1,\ldots, k_r|z)
=  -\frac{1}{z^{k_{1}}}\zeta(k_{2}, \ldots, k_{r}| z+1),
\end{equation}
where if $r= 1,$
\begin{equation}
\label{clincher}
\zeta(k|z+1) -\zeta(k|z)= - \frac{1}{z^{k}}\;\;
{\mbox{for $k>1.$}}
\end{equation}
An elementary calculation using the functional equation $\Gamma(z+1) =z\Gamma(z)$ shows that also
\[
\zeta(1|z+1)-\zeta(1|z) = -\frac{1}{z}.
\]

\begin{remark} The regularization procedure discussed above admits of an interesting formulation involving concrete limits in the style of (\ref{Zeta1}):
To see this, we introduce non-commuting variables $y_{t}$ for real $t \geq 
1$ and  define a stuffle product on the non-commuting polynomial 
algebra $\Lambda <\{ y_{t}\}_{t \geq 1}>$ as before:
\[
y_{t_{1}} * y_{t_{2}} = y_{t_{1}}y_{t_{2}}+y_{t_{2}}y_{t_{1}} + 
y_{t_{1}+t_{2}} 
\] 
for any $t_{1}; t_{2}\geq 1.$ 
We denote this algebra by $\dbh_{\st, \lvR}.$
Also, write 
\[
\lim_{\varepsilon \rightarrow 0^{+}}
y_{k +\varepsilon} = y_{k}
\] 
for any 
$k \in \lvN\backslash\{0\}.$ 
Such limits clearly commute with the stuffle product. 

Now consider the {\sc Hurwitz} polyzeta functions 
$\zeta(s_{1}, \ldots, s_{r}|z)$ where the $s_{j} \in\lvR$ have $s_{j} \geq 1 $ for $j  = 1, 2, \ldots, r-1$ and $s_{r}>1.$ Such functions satisfy the stuffle relations and form a $\lvQ$-algebra  which is isomorphic to a subalgebra, say $\dbh_{\st, \lvR}^{0}$, of $\dbh_{\st, \lvR},$ when we take $\Lambda = \lvQ.$ To extend the correspondence to the entire $\dbh_{\st, \lvR},$ as before we employ the regularization for $\zeta(1|z)$ given above. Then (\ref{Zeta1}) is the analogue for the regularized {\sc Hurwitz} polyzeta functions of 
\[
\lim_{\varepsilon \rightarrow 0^{+}}y_{1+ \varepsilon} = y_{1}
\]
and evidently commutes with stuffle product among these polyzeta functions. In this way, any regularized {\sc Hurwitz} polyzeta function (at a tuple of integers) may be expressed as a limit which equals the stuffle product expression 
that can be built up as before to give the regularized value. The point is that regularization assigns meaning to a certain symbol, (which amounts to the assignment of a function of $z$ in the instances of the {\sc Hurwitz} polyzeta algebras), and here, via certain well-defined limits an alternative description of the regularization is supplied.
An example will clarify these ideas:
\begin{e.g.}
Here the limit regularization of $\zeta(k,1|z)$ for any integer $k>1$ will be elucidated:
Corresponding to the stuffle product
\[
y_{k}*y_{1+\varepsilon} = y_{k}y_{1+\varepsilon} + y_{1+\varepsilon}y_{k} + y_{k+1+\varepsilon}\]
we have the equality
\begin{align}
\label{5}
&
\left( \sum_{n=0}^{\infty} \frac{1}{(z+n)^{k}}\right) \left( 
\sum_{n=0}^{\infty}\frac{1}{(z+n)^{1+\varepsilon}} \right) 
\\
&= \sum_{0 \leq n_{1}<n_{2}}\frac{1}{(z+n_{1})^{k}(z+n_{2})^{1+\varepsilon}}
+\sum_{0 \leq n_{1}<n_{2}}\frac{1}{(z+n_{1})^{1+\varepsilon}(z+n_{2})^{k}}
+\sum_{n=0}^{\infty}\frac{1}{(z+n)^{k+1+\varepsilon}}
\notag
\end{align}
for $\varepsilon >0.$

Then 
\begin{align*}
&
\left( \sum_{n=0}^{\infty} \frac{1}{(z+n)^{k}}\right) 
\left( 
\sum_{n=0}^{\infty}\frac{1}{(z+n)^{1+\varepsilon}} \right) 
-
\left( 
\sum_{n=0}^{\infty} \frac{1}{(z+n)^{k}}\right) 
\left( 
\frac{1}{\varepsilon} 
\right)
\\
&=
\left( 
\sum_{n=0}^{\infty}\frac{1}{(z+n)^{k}} 
\right) 
\left( 
\sum_{n=0}^{\infty} \frac{1}{(z+n)^{1+\varepsilon}} - \frac{1}{\varepsilon}
\right) 
\end{align*} 
and in the limit as $\varepsilon \rightarrow 0,$ this expression approaches 
\[
\left( 
\sum_{n=0}^{\infty} \frac{1}{(z+n)^{k}}
\right) 
\left[-\frac{\Gamma(z)}{\Gamma(z)}\right] = \zeta(k|z) \zeta(1|z).
\]
Solving (\ref{5}) for the sum we are attempting to regularize, namely
\[
\sum_{0 \leq n_{1}<n_{2}}\frac{1}{(z+n_{1})^{k}(z+n_{2})^{1+\varepsilon}}
\]
and subtracting 
\[
\sum_{n=0}^{\infty} \frac{1}{(z+n)^{k}}
\cdot
\frac{1}{\varepsilon} 
\] from both sides of the result before finally taking the limit, we find that 
\[
\lim_{\varepsilon \rightarrow 0} \left( \zeta(k, 1+ \varepsilon|z) - \frac{\zeta(k|z)}{\varepsilon} 
\right)
=\zeta(k|z)\zeta(1|z) - \zeta(1,k|z) - \zeta(k+1|z),
\]
which is the regularized value of $\zeta(k,1|z).$
\end{e.g.}

As a sampling of similar computations we mention that
for any integers $k_{1}, \ldots, k_{r}$ where $k_{r} >1$,
\begin{align*}
&\lim_{\varepsilon \rightarrow 0}\left(
\zeta(k_{1}, \ldots, k_{r}, 1+\varepsilon|z) - \frac{\zeta(k_{1}, \ldots, k_{r}|z)}{\varepsilon} \right)
=
\zeta(k_{1}, \ldots, k_{r}|z)\zeta(1|z) \\
&
- \sum_{j=1}^{r} \zeta(k_{1}, \ldots, k_{j-1}, 1, k_{j}, \ldots, k_{r}|z) 
- \sum_{j=1}^{r}\zeta(k_{1}, \ldots, k_{j-1}, 
k_{j}+1, k_{j+1}, \ldots, k_{r}|z).
\end{align*}
Also, in regularizing $\zeta(2,1,1|z)$ we obtain
\begin{eqnarray*}
 & &
\lim_{\varepsilon \rightarrow 0} \lim_{\delta \rightarrow 0} \left( 
\zeta(2,1+\varepsilon, 1+\delta|z)
+
\zeta(2,1+\delta, 1+\varepsilon|z)
- \frac{\zeta(2|z)}{\varepsilon}
\zeta(1+\delta|z)\right.
\\
 & &
\left.
- \frac{\zeta(2|z)}{\delta}\left(
{\zeta({1+\varepsilon}|z)} - \frac{1}{\varepsilon} \right)
+ \frac{\zeta(3|z)}{\delta}
+ \frac{\zeta(3|z)}{\varepsilon}
+ \frac{\zeta(1,2|z)}{\delta}
+ \frac{\zeta(1,2|z)}{\varepsilon}
\right)
\\
&=& \zeta(1|z)^{2}\zeta(2|z) +\zeta(1|z)(-2\zeta(3|z)-2\zeta(1,2|z))+\zeta(4|z)+2\zeta(1,3|z)
+2\zeta(1,1,2|z).
\end{eqnarray*}
\end{remark}

\subsection{The algebraic KZ equation}
Introducing variables $\{Y_{j}\}_{j=1}^{\infty}$ dual to the $y_{j},$ we may form the graded {\sc Hopf} algebra dual to the algebra $\Lambda<y_{j}>_{j=1}^{\infty}$ of polynomials in the non-commuting $y_{j}$. 
In this case, the comultiplication is given by
\[
\Delta_{\ST}(Y_{k}) =
1 \otimes Y_{k}
+
\sum_{j=0}^{k-1}Y_{j} \otimes Y_{k-j}
+
Y_{k} \otimes 1.
\]
This is designed in such a way that in each term on the right hand side, the stuffle product of the two constituents of the tensor product has the left hand side as a term. As a consequence of this it is not hard to show by a formal argument that the generating function of {\sc Hurwitz} polyzeta functions
\[
H(z):= \sum_{y_{k_{1}} \ldots y_{k_{r}} \in Y^{*}}\zeta(y_{k_{1}} \ldots y_{ k_{r}}|z)Y_{k_{1}} \ldots Y_{k_{r}},
\]
(where the coefficient of the empty word is set to be $1$), is group-like - i.e. satisfies 
\[
\Delta_{ST}H(z) = H(z) \otimes H(z).
\]

The analogue for $H(z)$ of the differential equation satisfied by $Li(z)$ is, as disclosed above, of an algebraic nature:
The equations ({\ref{fun}}) can be brought together in a universal functional 
equation for {\sc Hurwitz} polyzeta functions, just as the formal KZ equation is a compilation of differential equations satisfied by the various polylogarithm functions. To be precise, we have the 
{\thm{
\label{t:AKZ}
If $D_{-}$ denotes the difference operator 
$D_{-}F(z) = F(z) - F(z-1),$ 
then
\begin{equation}
\label{DeQ}
D_{-}H(z) =  -\sum_{k=1}^{\infty}\frac{Y_{k}}{(z-1)^{k}}H(z).
\end{equation}
}}
{\bf Proof:}
From (\ref{fun}), it is clear that for fixed $k \geq 1,$ we have
\begin{eqnarray*}
\sum_{w \in Y^{*}}\zeta(y_{k}w|z)Y_{k}W
&=&
\sum_{w \in Y^{*}}\zeta(y_{k}w|z-1)Y_{k}W
-
\frac{Y_{k}}{(z-1)^{k}}\sum_{w \in Y^{*}}\zeta(w|z)
\\
&=&
\sum_{w \in Y^{*}}\zeta(y_{k}w|z-1)Y_{k}W
-
\frac{Y_{k}}{(z-1)^{k}}H(z)
\end{eqnarray*}
where $W$ is the word in the variables $Y_{j}$ which is dual to $w$.

But then certainly
\[
H(z) = H(z-1) - \sum_{k=1}^{\infty}\frac{Y_{k}}{(z-1)^{k}}H(z)
\]
and the statement of the theorem is immediate.
$\hfill \Box$

{\section{Difference connections}}

The {\sc Knizhnik-Zamolodchikov} equation characterizes flat sections 
of the universal unipotent bundle with connection on $\Pomt$. Although much less rich, an algebraic analogue can be ideated around (\ref{DeQ}). To this end, use
$
\dbM_{k}
$
to denote the family of meromorphic functions of a single complex variable which have poles at all integers less than or equal to some integer $k$. Then $\Gamma(z) \in \dbM_{0}$ as is $\zeta(s_{1}, \ldots, s_{r};z)$ (for suitable tuples of complex $s_{j}$) when viewed as a function of $z.$ Notice that  if $F(z) \in \dbM_{k},$ then $F(z-1) \in \dbM_{k+1}$, so the operator $D^{-}:F(z) \mapsto F(z-1)$ sends $\dbM_{k}$ onto $\dbM_{k+1}.$  Now let $\dbM_{\infty}$ denote the union of the $\dbM_{k}$ for all integer $k$, a space which is invariant under $D^{-}.$

We have $D^{-} = I-D_{-}$ and (\ref{DeQ}) is the same as
\[
\left(D^{-}-I-\sum_{k=1}^{\infty}\frac{Y_{k}}{(z-1)^{k}}\right) H(z)=0.
\]
The operator 
$D^{-}-I-\sum_{k=1}^{\infty}\frac{Y_{k}}{(z-1)^{k}}$ should play the role of an algebraic version of a connection.

An $\dbM_{\infty}$-bundle ${\sE}$ on $X:=\lvP^{1}_{\lvC} \backslash\{a_{1}, \ldots, a_{N}\}$ where the $a_{j} \in \lvZ \cup \{\infty\}$ is a bundle on $X$ having $\dbM_{k}$ as a subbundle for each integer $k$, where by an abuse of notation $\dbM_{k}=:\dbM_{k}(X)$ comprises those functions on $X$ which when viewed as functions on $\lvC$ using the usual coordinates are meromorphic with poles at all integers less than or equal to $k$. (We also require a pole at $\infty$ if the latter is a point of $X$.) An $\dbM_{k}$-bundle on $X$ is a bundle merely required to have $\dbM_{k}$ itself as subbundle.

Notice that $\dbM_{\infty}=:\dbM_{\infty}(X)$ may itself be regarded as an
$\dbM_{\infty}$-bundle on such an $X.$ This bundle is the analogue of the space of differential 1-forms on $X.$

With this notation, we make the 
\begin{defn}
Given an $\dbM_{0}$-bundle ${\sE}$ on $X$, a difference connection on  the bundle is a $\lvC$-linear mapping
\[
\nabla^{alg}:{\sE} \rightarrow {\dbM}_{\infty} \otimes_{ \dbM_{0}} {\sE}
\]
for which for any $f \in \dbM_{\infty}$ and any section $s$ of ${\sE},$
\begin{equation}
\label{Leib}
\nabla^{alg}(fs) =D^{-}f \cdot (-D_{-}s) + f (\nabla^{alg} s).
\end{equation}
\end{defn}
The condition (\ref{Leib}) should be regarded as an algebraic analogue of the usual {\sc Leibnitz} rule. 

In what follows, any bundle will be assumed to be an $\dbM_{0}$-bundle.

We require the following notion:
\begin{defn}
A bundle with difference connection 
$({\sE}, \nabla^{alg}) $ is called unipotent when 
there is some $r$ and some sequence of upper triangular matrices $\{N_{k}\}_{k\geq 1}$ such that
\[
({\sE}, \nabla^{alg})
\simeq
(\dbM_{0}^{r}, D^{-}-I-\sum_{k=1}^{\infty}\frac{N_{k}}{(z-1)^{k}}).
\]
\end{defn}

We proceed to construct an object which satisfies a versal property with respect to the bundles with unipotent difference connection.

Consider the algebra $\lvC<Y>$ of polynomials in the countable set of non-commuting variables $Y:=\{Y_{j}\}_{j=1}^{\infty}.$ Let $J$ denote the augmentation ideal $J = (Y_{1}, Y_{2}, \ldots).$ Then write
\[
U_{n}:= \lvC<Y> / J^{n+1}.
\]
This algebra comprises those  polynomials in which the words that appear have length at most $n$. Now form ${\sU}_{n}:=U_{n} \otimes \dbM_{0}$ and 
${\sU} = \lim_{\leftarrow}U_{n}.$ Observe that we could write ${\sU} = \dbM_{0}<<Y>>$ since effectively we are considering the formal power series algebra in the non-commuting variables $Y_{k}$ with coefficients taken from $\dbM_{0}.$ Also, notice that $H(z) \in {\sU}$ when we take $X$ to be $\lvC.$ 

On ${\sU}$ a difference connection $\nabla^{A}$ is determined by defining for each $n$, 
\begin{eqnarray*}
&&\nabla^{A}_{n}\left( 
\sum_{|w| \leq n}f_{w}(z)[w]
\right)
\\
&:=&
\sum_{|w|\leq n} f_{w}(z-1)[w] - 
\sum_{|w|\leq n}f_{w}(z)[w] - 
\pr_{n} \sum_{|w|\leq n}f_{w}(z)
\sum_{k=1}^{\infty}\frac{1}{(z-1)^{k}}[Y_{k}w]
\end{eqnarray*}
where $[w]$ denotes the class of the word $w$, and $\pr_{n}$ indicates  projection to words of length less than or equal to $n$.

\addtocounter{thm}{1}
\begin{thm}
\label{L}
Given any $\dbM_{0}$-bundle ${\sE}$ with unipotent difference connection $\nabla^{alg}$ on $X$ along with a point $b\in X$ and any $v \in {\sE}_{b},$  there exists a mapping
\[
\psi_{v}: ({\sU}, \nabla^{A}) \rightarrow ({\sE}, \nabla^{alg})
\]
which is compatible with the difference connections and 
has 
$(\psi_{v})_{b} (1) = v.$
\end{thm}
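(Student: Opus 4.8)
The plan is to imitate the standard construction of the comparison map for the universal prounipotent bundle in the KZ setting, working level by level in the augmentation filtration. Since $(\sE, \nabla^{alg})$ is unipotent, by definition we may fix an isomorphism $(\sE, \nabla^{alg}) \simeq (\dbM_0^r, D^- - I - \sum_{k\geq 1} N_k/(z-1)^k)$ for some upper-triangular matrices $N_k$; it therefore suffices to build the map into this model bundle. The idea is to send each non-commuting variable $Y_k$ to the matrix $N_k$ — more precisely, to define $\psi_v$ on a word $[Y_{k_1}\cdots Y_{k_s}]$ by $N_{k_1}\cdots N_{k_s}$ acting on a suitable vector determined by $v$, and extend $\dbM_0$-linearly. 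Concretely I would set $\psi_v\bigl(\sum_{|w|\leq n} f_w(z)[w]\bigr) := \sum_{|w|\leq n} f_w(z)\, N_w\, v_0$, where $N_w = N_{k_1}\cdots N_{k_s}$ for $w = Y_{k_1}\cdots Y_{k_s}$, and $v_0 \in \dbM_0^r$ is chosen so that its evaluation at $b$ is the given $v$ (here one must use that $b \in X$, so the entries of $v_0$ need not have a pole at $b$; the constant functions equal to the coordinates of $v$ will do). Because the $N_k$ are strictly/upper triangular and lie in $J$-degree $\geq 1$, this is well defined at each finite level $U_n$ and passes to the inverse limit, so $\psi_v$ is a morphism of $\dbM_0$-bundles with $(\psi_v)_b(1) = v$.

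The substance of the proof is checking compatibility with the difference connections, i.e. that $\nabla^{alg}\circ \psi_v = (\id \otimes \psi_v)\circ \nabla^A$. Unwinding the definition of $\nabla^A_n$, on a single term $f_w(z)[w]$ its image under $\nabla^A$ is $f_w(z-1)[w] - f_w(z)[w] - f_w(z)\sum_k (z-1)^{-k}[Y_k w]$. Applying $\psi_v$ (extended to $\dbM_\infty \otimes_{\dbM_0} \sU$ in the evident way) turns this into $f_w(z-1) N_w v_0 - f_w(z) N_w v_0 - f_w(z)\sum_k (z-1)^{-k} N_k N_w v_0$. On the other hand, $\nabla^{alg}$ applied to $\psi_v(f_w(z)[w]) = f_w(z) N_w v_0$ is computed from the model connection $D^- - I - \sum_k N_k/(z-1)^k$ together with the Leibnitz rule (\ref{Leib}): it equals $D^-(f_w) \cdot (-D_- (N_w v_0)) + f_w \cdot \nabla^{alg}(N_w v_0)$. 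Since $N_w v_0 \in \dbM_0^r$ and the model connection acts on it as $D^-(N_w v_0) - N_w v_0 - \sum_k (z-1)^{-k} N_k N_w v_0$, and since $D^-(f)\cdot(-D_- g) + f\cdot D^- g$ collapses (after expanding $D_- = I - D^-$) to the single term $D^-(fg) - \text{(products of shifted and unshifted pieces)}$ — here is the only spot where a genuine algebraic identity must be verified — both sides reduce to $f_w(z-1) N_w v_0 - f_w(z) N_w v_0 - f_w(z)\sum_k (z-1)^{-k} N_k N_w v_0$. Linearity in $w$ and passage to the limit over $n$ then finish the argument.

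The main obstacle, and the step I would take most care over, is the bookkeeping in that last identity: the algebraic Leibnitz rule (\ref{Leib}) mixes the shift operator $D^-$ and the ``difference'' operator $D_- = I - D^-$ asymmetrically, so when one expands $\nabla^{alg}(f_w \cdot N_w v_0)$ using the model connection on the factor $N_w v_0$, the cross terms $D^-(f_w)\cdot D^-(N_w v_0)$ and the bare $-D_-(\,\cdot\,)$ contribution must be shown to reassemble exactly into the three-term expression produced by $\nabla^A$. This is a short but slightly delicate computation precisely because $D^-$ is multiplicative on products of functions ($D^-(fg) = D^-f\cdot D^-g$) whereas $D_-$ is not; one should write everything in terms of $D^-$ first, cancel, and only at the end re-express the result. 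A secondary point to address cleanly is well-definedness: one must check that $\psi_v$ respects the quotients $\lvC<Y>/J^{n+1}$ — immediate since each $N_k$ raises $J$-degree — and that the projection operators $\pr_n$ in the definition of $\nabla^A_n$ are compatible with $\psi_v$ in the limit, which again follows from $N_k \in J$-degree $1$. No transcendence input (Theorem D) is needed here; the construction is purely formal once the model presentation of the unipotent connection is fixed.
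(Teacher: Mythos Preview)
Your construction is exactly the paper's: define $N_w := N_{i_1}\cdots N_{i_r}$ for $w = Y_{i_1}\cdots Y_{i_r}$ and set $\psi_v\bigl(\sum_w f_w(z)[w]\bigr) = \sum_w f_w(z)\,N_w\cdot v$, then deduce compatibility from the single identity $N_k N_w = N_{Y_k w}$. The paper's own proof is considerably terser than yours --- it records that identity and asserts compatibility ``readily'' without unwinding the Leibnitz rule --- so your proposal is correct and follows the same route, with the extra bookkeeping (well-definedness at each level $U_n$, the $D^-$ versus $D_-$ cross-terms) being details the paper simply leaves to the reader.
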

{\bf Proof:}
Corresponding to a word $w = Y_{i_{1}} \ldots Y_{i_{r}}$, set $N_{w}:= N_{i_{1}} \ldots N_{i_{r}}.$ Then define
\[
\psi: ({\sU}, \nabla^{A}) \rightarrow ({\sE}, \nabla^{alg})
\]
by taking
\[
\psi \left(
\sum_{w} f_{w}(z)[w] \right)
=
\sum_{w}f_{w}(z)N_{w}(z)\cdot v.
\]

Notice that $\psi_{b} (1) = 1 \cdot v$.

Now $N_{k}N_{w} = N_{Y_{k}w}.$ Because of this, one sees readily that $\psi \circ \nabla^{A} = \psi \circ \nabla^{alg}.\hfill \Box $

As in the topological case,
one can define a unipotent fundamental group $\pi_{1}^{DR,diff}(X,b)$ as 
the tensor compatible automorphisms of the fiber functor with respect to the category of unipotent {\em difference connections}. This group acts on sections of any bundle with difference connection, and by definition, the action commutes with the $\psi_{v}$ of the Theorem. 
This gives the ``parallel transport'' action on the bundle, but unlike the iterated integral situation, this action admits no description intrinsic to the flat section of the bundle (with respect to the {\em difference} connection). Moreover, while the parallel transport provides a means of showing that the analogous mapping to $\psi_{v}$ in the topological context is characterized by its action on the element $1$ of the fiber above $b$, via the linear independence of the iterated integral (polylogarithm) functions, even though the {\sc Hurwitz} polyzeta functions are linearly independent over $\lvC$, this linear independence does not interact suitably with the action of $\pi_{1}^{DR,diff}(X,b)$ to facilitate similar conclusions.

By construction, $H(z)$ is a flat section of 
$({\sU}, \nabla^{A})$.

\section{The normalized multiple {\sc Bernoulli} 
polynomials}
The classical {\sc Hurwitz} zeta function defined for $\Re s>1$ by
\[
\zeta(s,z)=\sum_{n=0}^{\infty}\frac{1}{(z+n)^{s}}
\]
may be analytically continued in the complex variable $s$: For example modifying {\sc Riemann's} contour integral approach to his zeta function $\zeta(s)$, it is easy to show that
\[
\zeta(s,z)= \frac{\Gamma(1-s)}{2 \pi i}\int_{C}\frac{e^{(1-z)w}}{e^w -1}{(-w)^{s}}\frac{dw}{w},
\]
where $C$ is the {\sc Hankel} contour in $\lvC$ (i.e.  a loop about 0 based at infinity, enclosing the positive real axis). This integral expression converges for all values of $s \in \lvC \backslash\{1\}$, and (as was known classically) at non-positive integers $s=-k$ one finds 
\begin{equation}
\label{HB}
\zeta(-k,z) = -\frac{B_{k+1}(z)}{k+1},
\end{equation}
where $B_{k}(z)$ is the $k$th {\sc Bernoulli} polynomial. (Again cf. \cite{Cartier}.) Henceforth write $\zeta(-k,z)=:\zeta(-k|z).$

These normalized {\sc Bernoulli} polynomials thus belong to the family of {\sc Hurwitz} zeta functions, and if $\zeta(s)$ denotes the {\sc Riemann} zeta function and $B_{k}:=B_{k}(0)=(-1)^{k}B_{k}(1)$ is the $k$th {\sc Bernoulli} number, 
then the fact that 
\[
\zeta(-k) = (-1)^{k}\frac{B_{k+1}}{k+1}
\]
concords with  $\zeta(s, 1) = \zeta(s)$.

Various generalizations of these polynomials to multiple versions exist, going back to work of {\sc Barnes} in 1899 and much more recently 
{\sc Szenes} in \cite{Szenes}, and {\sc Komori, Matsumoto} and {\sc Tsumura} in \cite{KMT}.  Our approach here is somewhat different however, being determined by solving the system of difference equations
\begin{equation}
\label{B0}
V(-k_{1}, \ldots, -k_{r}|z+1) - V(-k_{1}, \ldots, -k_{r}|z) = 
-z^{k_{1}}V(-k_{2}, \ldots, -k_{r}|z+1)
\end{equation}
where the $k_{j}$ are non-negative, for all possible $r \geq 1.$ Of course, these are the difference equations (\ref{fun}) satisfied by the {\sc Hurwitz}  polyzeta functions, but at parameters $(-k_1, \ldots, -k_r)$ at which the polyzeta functions do not exist, (cf. \cite{Akiyama:etal}).

\begin{thm}
\label{formula}
Up to addition of a function of period 1, the solutions to (\ref{B0}) are given by the recursive formula
\[
\zeta(-k_1,\ldots, -k_r|z)
=
-\frac{1}{k_r+1}\zeta(-k_1, \ldots, -k_{r-2}, -k_{r-1}-k_r-1|z)
\]
\[
-\frac{1}{2}\zeta(-k_1, \ldots, -k_{r-2}, -k_{r-1}-k_r|z)
+
\sum_{q=1}^{k_{r}}(-k_r)_q\frac{B_{q+1}}{(q+1)!}\zeta(-k_1, \ldots, -k_{r-2},-k_{r-1}-k_r +q|z),
\]
writing $\zeta(-k|z) = -\frac{B_{k+1}(z)}{k+1}$ as above.
\end{thm}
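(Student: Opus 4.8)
The plan is to prove Theorem~\ref{formula} by induction on the length $r$, reducing the difference equation (\ref{B0}) at level $r$ to the already-solved level-$r-1$ problem via a single explicit antidifference computation. The key observation is that (\ref{B0}) says $D^{-}$ applied to $\zeta(-k_1,\dots,-k_r|z)$ differs from the identity by the known quantity $-z^{k_1}\zeta(-k_2,\dots,-k_r|z+1)$; equivalently, reading the recursion from the right, we must antidifference the product $z^{k_r}\cdot(\text{level } r-1 \text{ polynomial})$ against the operator $D_{-}$. So the first step is to isolate the last slot: set $g(z):=\zeta(-k_1,\dots,-k_{r-1}|z)$, which is already a polynomial of known form by the inductive hypothesis (base case $r=1$ being $\zeta(-k|z)=-B_{k+1}(z)/(k+1)$ from (\ref{HB})), and observe that the desired $f(z):=\zeta(-k_1,\dots,-k_r|z)$ must satisfy $f(z+1)-f(z)=-z^{k_r}g(z+1)$, or after shifting, $D_{-}f(z)=f(z)-f(z-1)=-(z-1)^{k_r}g(z)$.

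The second step is to produce the explicit antidifference. Here I would use the standard fact that $D_{-}$ acting on Bernoulli polynomials gives monomials: $B_{m}(z)-B_{m}(z-1)=m(z-1)^{m-1}$, so $B_{m}(z)/m$ is an antidifference of $(z-1)^{m-1}$. More generally, to antidifference $(z-1)^{k_r}g(z)$ for a polynomial $g$, one expands $g$ in a suitable basis and integrates the resulting monomials times $(z-1)^{k_r}$; the combinatorial bookkeeping is exactly what produces the coefficients $-\frac{1}{k_r+1}$, $-\frac{1}{2}$, and the sum $\sum_{q=1}^{k_r}(-k_r)_q\frac{B_{q+1}}{(q+1)!}$. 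In practice the cleanest route is to \emph{verify} the claimed formula rather than rederive it: plug the right-hand side of the asserted recursion into $D_{-}$, use the level-$r-1$ difference equation (\ref{B0}) satisfied by each of the three families $\zeta(\dots,-k_{r-1}-k_r-1|z)$, $\zeta(\dots,-k_{r-1}-k_r|z)$, $\zeta(\dots,-k_{r-1}-k_r+q|z)$ appearing on the right, and check that all the level-$r-2$ terms cancel while the surviving terms collapse to $-(z-1)^{k_r}\zeta(-k_1,\dots,-k_{r-1}|z)$. That last collapse is a polynomial identity in $z$ which boils down to
\[
-\frac{(z-1)^{k_r+1}}{k_r+1}-\frac{(z-1)^{k_r}}{2}+\sum_{q=1}^{k_r}(-k_r)_q\frac{B_{q+1}}{(q+1)!}(z-1)^{k_r-q}
\]
being an antidifference of $(z-1)^{k_r}$ up to a constant, which is just the generating-function identity for Bernoulli numbers applied term-by-term (equivalently, the expansion of $B_{k_r+1}(z)/(k_r+1)$ shifted appropriately, since $(-k_r)_q/(q+1)!$ encodes the binomial coefficient $\binom{k_r+1}{q+1}/(k_r+1)$ up to sign).

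Finally I would address the uniqueness clause — ``up to addition of a function of period~1.'' This is immediate: if $f_1,f_2$ both solve (\ref{B0}) with the same right-hand side, then $D_{-}(f_1-f_2)=0$, i.e. $f_1-f_2$ has period~$1$; and conversely adding any period-$1$ function preserves solutions. Since we have exhibited one polynomial solution, the solution set is precisely that polynomial plus the period-$1$ functions. The main obstacle I anticipate is not conceptual but bookkeeping: correctly matching the falling-factorial coefficients $(-k_r)_q = (-k_r)(-k_r+1)\cdots(-k_r+q-1)$ against the binomial coefficients that appear when one antidifferences $(z-1)^{k_r}$, and making sure the $q=0$ and the ``$B_1=-\tfrac12$'' terms are not double-counted (they are split off as the explicit $-\frac{1}{k_r+1}$ and $-\frac{1}{2}$ pieces, with the sum starting at $q=1$). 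Getting the signs right in $(-k_r)_q\frac{B_{q+1}}{(q+1)!}$ versus $\binom{k_r+1}{q+1}\frac{B_{q+1}}{k_r+1}$ is the one place where an error is easy to make, so I would pin that down first by checking the identity for small $k_r$ (say $k_r=1,2$) before writing the general induction.
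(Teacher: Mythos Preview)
There is a genuine gap: you have misidentified which index the difference equation (\ref{B0}) removes. Equation (\ref{B0}) peels off the \emph{first} index, giving
\[
f(z+1)-f(z)=-z^{k_1}\,\zeta(-k_2,\ldots,-k_r\mid z+1),
\]
not $-z^{k_r}\,\zeta(-k_1,\ldots,-k_{r-1}\mid z+1)$ as you wrote. There is no analogue of (\ref{B0}) that strips the last slot, so your setup $D_{-}f(z)=-(z-1)^{k_r}g(z)$ with $g=\zeta(-k_1,\ldots,-k_{r-1}\mid\cdot)$ is simply false, and the polynomial identity you propose to check at the end is not the relevant one.

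Your verification strategy can be salvaged, but the mechanics are different from what you describe. If you apply (\ref{B0}) to each depth-$(r-1)$ term $\zeta(-k_1,\ldots,-k_{r-2},-m\mid z)$ on the right-hand side of the claimed formula, you get $-z^{k_1}\,\zeta(-k_2,\ldots,-k_{r-2},-m\mid z+1)$. These depth-$(r-2)$ terms do \emph{not} cancel; rather, the resulting linear combination is, by the inductive hypothesis (the formula at depth $r-1$ applied to the tuple $(-k_2,\ldots,-k_r)$), exactly $-z^{k_1}\,\zeta(-k_2,\ldots,-k_r\mid z+1)$. That is the right-hand side of (\ref{B0}) at depth $r$, and uniqueness up to a period-$1$ function finishes the argument. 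This is essentially the paper's inductive step, which first carries out the computation explicitly for $r=2$ (there the Bernoulli-coefficient bookkeeping you anticipate does appear) and then observes that for general $r$ the equation decomposes into a linear combination of depth-$(r-1)$ instances of (\ref{B0}) with the same coefficients. So the induction collapses to the formula one level down, not to a standalone polynomial identity involving $(z-1)^{k_r}$.
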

\begin{proof}
As is shown in 
\cite{Meschkowski} the (normalized) {\sc Bernoulli} polynomials $\zeta(-n|z)$ give the unique solution to 
\begin{equation}
\label{B1}
u(z+1)-u(z) = -z^{n}
\end{equation}
for any $n \geq 0,$ up to addition of a periodic function - i.e. the solution to (\ref{B0}) when $r=1$ corresponds to $\zeta(-k_1|z)=-\frac{B_{k_{1}+1}(z)}{k_{1}+1}.$ 

Since each $B_{n}(z)$ is polynomial, when $r=2$ the
right side of (\ref{B0}) is also polynomial.  Consequently, by linearity, (\ref{B0}) reduces to a sum of equations of the form of (\ref{B1}), so that some linear combination of usual {\sc Bernoulli} polynomials  gives a solution. Then for $r=3,$ again the right hand side of (\ref{B0}) is polynomial so may be solved by the same method. Proceeding inductively, it is clear that polynomial solutions exist for all possible $r$, which may be explicitly determined: 
 
Consider firstly the case that $r=2,$ and recall the well-known formula
\[
B_{n}(z)= \sum_{j=0}^{n}
\left(
\begin{array}{c}
n \\ j
\end{array}
\right) B_{j}z^{n-j}\;.
\]
Using (\ref{B1}) with $n=k_{2},$ 
(\ref{B0}) then becomes
\[
V(-k_{1}, -k_{2}|z+1)- V(-k_{1},-k_{2}|z) = z^{k_{1}+k_{2}}+\frac{1}{k_{2}+1}\sum_{j=0}^{k_{2}+1}\left(\begin{array}{c}
k_{2}+1 \\ j
\end{array}
\right) B_{j}z^{k_{1}+k_{2}+1-j}
\]
for which a solution is
\[
-\zeta(-k_1-k_{2}|z)-\frac{1}{k_2+1}\sum_{j=0}^{k_{2}+1}\left(\begin{array}{c}
k_{2}+1 
\\ j
\end{array}
\right) B_{j}\cdot\zeta(-k_{1}-k_{2}+j-1|z)
=:\zeta(-k_{1}, -k_{2}|z).
\]
Now write out the $j=0$ and $j=1$ terms of the sum, set $l=j-1,$ and use the {\sc Pochhammer} symbol notation to write 
\[
k_2 (k_2-1) \ldots (k_2 -(l-1)) = (-k_2)_l (-1)^{l}.
\]
One finds
\begin{align*}
&\zeta(-k_1, -k_2|z)
\\
&=
-\frac{1}{2}\zeta(-k_1-k_2|z)-
\frac{1}{k_{2}+1}\zeta(-k_1-k_2-1|z)
\\
&
+\sum_{l=1}^{k_{2}}(-k_2)_{l}(-1)^{l+1} \cdot \frac{B_{l+1}}{(l+1)!}\cdot \zeta(-k_{1}-k_{2}+l|z) 
\end{align*}
The $(-1)^{l+1}$ factor of the last line may be omitted since $B_{l+1}=0$ for all even $l \geq 2.$

Continue by induction, supposing that it is known that a solution to (\ref{B0}) has the requisite form for all positive integers $r$ less than or equal to some $N-1$. Then by this inductive hypothesis, 
\begin{align*}
&
\zeta(-k_1, \ldots, -k_N|z+1)
-
\zeta(-k_1, \ldots, -k_N|z)
\\
&=
-z^{k_1}\zeta(-k_2, \ldots, -k_N|z+1)
\\
&=
-z^{k_1}\left[
-\frac{1}{k_N+1}\zeta(-k_2, \ldots, -k_{r-2}, -k_{N-1}-k_N-1|z+1)\right.
\\
&-\frac{1}{2}\zeta(-k_2, \ldots, -k_{N-2}, -k_{N-1}-k_N|z+1)
\\
&
+\left.
\sum_{q=1}^{k_{N}}(-k_N)_q\frac{B_{q+1}}{(q+1)!}\zeta(-k_2, \ldots, -k_{N-2},-k_{N-1}-k_N +q|z+1)\right].
\end{align*}
The difference equation obtained here may be regarded as a sum of difference equations of the form of 
\[
v(z+1) - v(z) = -Az^{k_1}w(z+1)
\]
where in each case, $w(z)$ is some normalized multiple {\sc Bernoulli} polynomial of depth $N-2$ and $A$ is some rational number. 
Each of these may be solved and the sum of these solutions gives the desired formula in the case that $r=N.$
\end{proof}

\begin{cor}
\label{3.2}
\[
\zeta(-k_1, \ldots, -k_r|1)
=
\lim_{s_1 \rightarrow -k_1}
\cdots
\lim_{s_r\rightarrow -k_r}
\zeta(s_1, \ldots, s_r),
\]
where $\zeta(s_1, \ldots, s_r)$ denotes the meromorphic continuation of the series
\[
\sum_{0<n_1\ldots<n_r}\frac{1}{n_1^{s_{1}}\ldots n_r^{s_r}}.
\]
\end{cor}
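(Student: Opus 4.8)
The plan is to reduce the corollary to matching two recursions. First I would note that $\zeta(s_1,\ldots,s_r|1)=\zeta(s_1,\ldots,s_r)$: reindexing $m_j=n_j+1$ turns the defining series of the {\sc Hurwitz} polyzeta at $z=1$ into the {\sc Euler--Zagier} series, so the two occurrences of ``$\zeta$'' on the right-hand side of the corollary agree as meromorphic functions of $(s_1,\ldots,s_r)$. Writing
\[
Z(-k_1,\ldots,-k_r):=\lim_{s_1\to-k_1}\cdots\lim_{s_r\to-k_r}\zeta(s_1,\ldots,s_r),
\]
the goal becomes $Z(-k_1,\ldots,-k_r)=\zeta(-k_1,\ldots,-k_r|1)$, and I would prove this by induction on the depth $r$, showing that $Z$ satisfies the recursion of Theorem \ref{formula} (evaluated at $z=1$) with the matching depth-one values.

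For $r=1$ the two sides are $Z(-k)=\lim_{s\to-k}\zeta(s)=\zeta(-k)=-B_{k+1}/(k+1)$ and $\zeta(-k|1)=-B_{k+1}(1)/(k+1)$; these agree for all $k\ge 0$ since $B_{k+1}(1)=B_{k+1}$ for $k\ge 1$ while $B_1(1)=\tfrac12$ gives $\zeta(0|1)=-\tfrac12=\zeta(0)$. For the inductive step I would peel off the innermost summation using the {\sc Hurwitz} zeta function: in the region of absolute convergence
\[
\zeta(s_1,\ldots,s_r)=\sum_{0<n_1<\cdots<n_{r-1}}n_1^{-s_1}\cdots n_{r-1}^{-s_{r-1}}\,\zeta(s_r,\,n_{r-1}+1),
\]
where $\zeta(s,w)=\sum_{m\ge 0}(w+m)^{-s}$ is holomorphic in $s$ away from $s=1$; for $\mathrm{Re}\,s_{r-1}$ large the sum converges locally uniformly in $s_r$ near $-k_r$, so the right-hand side continues $\zeta(s_1,\ldots,s_r)$ in $s_r$ there and $\lim_{s_r\to-k_r}$ may be taken termwise. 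Since $\zeta(-k_r,w)=-B_{k_r+1}(w)/(k_r+1)$ is a polynomial, writing $-B_{k_r+1}(x+1)/(k_r+1)=\sum_{a=0}^{k_r+1}c_a x^a$ so that $\zeta(-k_r,n_{r-1}+1)=\sum_a c_a n_{r-1}^a$, and resumming term by term over $n_{r-1}$, yields the identity of meromorphic functions
\[
\lim_{s_r\to-k_r}\zeta(s_1,\ldots,s_r)=\sum_{a=0}^{k_r+1}c_a\,\zeta(s_1,\ldots,s_{r-2},\,s_{r-1}-a)
\]
(with the {\sc Riemann} zeta function in place of the depth-$(r-1)$ function when $r=2$). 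Applying the remaining limits and noting that $\lim_{s_{r-1}\to-k_{r-1}}$ carries the $a$-th term to the depth-$(r-1)$ iterated limit at $(-k_1,\ldots,-k_{r-2},-k_{r-1}-a)$, I obtain
\[
Z(-k_1,\ldots,-k_r)=\sum_{a=0}^{k_r+1}c_a\,Z(-k_1,\ldots,-k_{r-2},\,-k_{r-1}-a).
\]

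To finish, I would identify the $c_a$ from $B_{k_r+1}(x+1)=B_{k_r+1}(x)+(k_r+1)x^{k_r}$ together with $B_n(x)=\sum_j\binom{n}{j}B_j x^{n-j}$: this gives $c_{k_r+1}=-\tfrac{1}{k_r+1}$, $c_{k_r}=-\tfrac12$, and $c_{k_r-q}=(-1)^{q+1}(-k_r)_q B_{q+1}/(q+1)!=(-k_r)_q B_{q+1}/(q+1)!$ for $1\le q\le k_r$ (the sign being immaterial because $B_{q+1}=0$ for even $q\ge 2$). Substituting $a=k_r+1$, $a=k_r$, $a=k_r-q$ turns the last argument into $-k_{r-1}-k_r-1$, $-k_{r-1}-k_r$, $-k_{r-1}-k_r+q$ respectively, and reproduces exactly the coefficients of Theorem \ref{formula}. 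Hence, by the inductive hypothesis applied to each depth-$(r-1)$ term, $Z$ and $\zeta(\,\cdot\,|1)$ satisfy the same recursion with the same base, so they coincide; this is the corollary.

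I expect the main obstacle to be the analytic justification of the displayed limits rather than the (elementary) coefficient algebra: one must know that $\zeta(s_1,\ldots,s_r)$ admits a meromorphic continuation (taken as given in the statement), that the {\sc Hurwitz}-splitting identity persists under continuation, and — most delicately — that along the prescribed order of limits the successive limits genuinely exist, i.e. that the residues of the Euler--Zagier function along its polar hyperplanes vanish at the integer tuples encountered. This is precisely the content of the analysis in \cite{Akiyama:etal}; in effect the recursion for $Z$ derived above is the one established there, so one may alternatively cite that recursion directly and simply match it, term by term, against Theorem \ref{formula} at $z=1$.
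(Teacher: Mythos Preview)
Your proposal is correct and is, in substance, exactly the paper's argument: the paper's one-line proof simply says the corollary follows by combining Theorem \ref{formula} with the expression for the iterated limit established in \cite{Akiyama:etal}, and you have unpacked precisely that matching, deriving (or re-deriving) the recursion for $Z$ via the {\sc Hurwitz}-splitting of the innermost sum and checking that its coefficients agree with those of Theorem \ref{formula} at $z=1$. Your closing remark --- that one may alternatively cite the recursion of \cite{Akiyama:etal} directly and compare --- is in fact the entirety of the paper's proof, and your correctly identified ``main obstacle'' (existence of the iterated limits and persistence of the splitting identity under continuation) is exactly what is being outsourced to that reference.
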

This is immediate from the Theorem combined with the expression for the limit given in \cite{Akiyama:etal}.

\subsection{Elementary facts pertaining to values of NMBPs}

\begin{prop}{
\[
\zeta(0)\zeta(-2n-1)
= \zeta(-2n+k-1,-k|1)
\]
for $n\geq 1$ and $k$ with $0\leq k \leq 2n+1.$}
\end{prop}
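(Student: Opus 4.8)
The plan is to compute $\zeta(-2n+k-1,-k|1)$ directly from the depth-two case of Theorem~\ref{formula} and then to exploit that the two arguments sum to the \emph{odd} integer $2n+1$, which kills almost every {\sc Bernoulli} number that occurs. Write $k_1:=2n-k+1$ and $k_2:=k$, so $k_1+k_2=2n+1$. The $r=2$ instance of Theorem~\ref{formula} — equivalently, solving (\ref{B0}) for $r=2$ by antidifferencing its polynomial right-hand side, exactly as in the proof of that theorem — expresses $\zeta(-k_1,-k_2|z)$ as a $\lvQ$-linear combination of the single normalized {\sc Bernoulli} polynomials $\zeta(-m|z)=-B_{m+1}(z)/(m+1)$, namely
\[
\zeta(-k_1,-k_2|z)=-\zeta(-(2n+1)|z)-\frac{1}{k_2+1}\sum_{j=0}^{k_2+1}\binom{k_2+1}{j}B_j\,\zeta(j-2n-2|z).
\]

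Next I would specialize $z=1$, using $B_m(1)=B_m$ for $m\neq1$ and $B_1(1)=\tfrac12$, so that $\zeta(-m|1)=-B_{m+1}/(m+1)$ for $m\ge1$ and $\zeta(0|1)=-\tfrac12$; since also $\zeta(0)=-\tfrac12$ and $\zeta(-2n-1)=\zeta(-(2n+1)|1)$ (cf.\ Corollary~\ref{3.2}), the claim is equivalent to $\zeta(-k_1,-k_2|1)=-\tfrac12\,\zeta(-(2n+1)|1)$. Then I would run through the sum term by term: $B_j$ vanishes unless $j\in\{0,1\}$ or $j$ is even, while $\zeta(j-2n-2|1)=-B_{2n+3-j}(1)/(2n+3-j)$ vanishes unless $2n+3-j$ is even or equal to $1$, i.e.\ unless $j$ is odd or $j=2n+2$. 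For $0\le k\le 2n$ these two conditions meet only at $j=1$: the $j=0$ term dies because $B_{2n+3}=0$, odd $j\ge3$ die because $B_j=0$, even $j$ with $2\le j\le 2n$ die because $B_{2n+3-j}=0$, and $j=2n+2$ is out of range. The surviving $j=1$ term contributes $-\tfrac{1}{k_2+1}\binom{k_2+1}{1}B_1\,\zeta(-(2n+1)|1)=\tfrac12\,\zeta(-(2n+1)|1)$, whence $\zeta(-k_1,-k_2|1)=-\zeta(-(2n+1)|1)+\tfrac12\,\zeta(-(2n+1)|1)=-\tfrac12\,\zeta(-(2n+1)|1)=\zeta(0)\zeta(-2n-1)$, as desired.

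The step I expect to be the main obstacle is the boundary configuration $k=2n+1$, where the first argument degenerates to $0$: there the index $j=2n+2$ is no longer out of range, so the term $\binom{2n+2}{2n+2}B_{2n+2}\,\zeta(0|1)$ — equivalently, the {\sc Pochhammer} coefficient $(-(2n+1))_{2n+1}=(-1)^{2n+1}(2n+1)!$ in the rewritten form of Theorem~\ref{formula} — survives and has to be pinned down by a separate direct computation; the opposite endpoint $k=0$ is harmless, being the same calculation with an empty sum together with $\zeta(-(2n+2)|1)=0$. Everything else reduces to routine parity bookkeeping once one notes that $2n+1$ is odd.
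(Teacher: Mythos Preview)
For $0\le k\le 2n$ your argument is correct and is exactly the paper's: write out the depth-two formula, evaluate at $z=1$, and use the parity of $j$ versus $2n+3-j$ to isolate the $j=1$ term. The paper's proof phrases this as ``since $j$ and $2n-j+3$ have opposite parity, the only non-zero term is that for which $j=1$. (Notice that $2n-j+3 \geq 2$.)''

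Your instinct about the boundary $k=2n+1$ is exactly right, and it is worth carrying the ``separate direct computation'' through rather than deferring it. When $k=2n+1$ the index $j=2n+2$ enters the sum, and then $2n+3-j=1$, so the paper's parenthetical ``$2n-j+3\ge 2$'' no longer holds. That extra term is
\[
-\frac{1}{2n+2}\binom{2n+2}{2n+2}B_{2n+2}\,\zeta(0|1)
=\frac{B_{2n+2}}{2(2n+2)},
\]
which is nonzero for $n\ge 1$ and adds to, rather than cancels with, the $j=1$ contribution. The upshot is
\[
\zeta(0,-(2n+1)|1)=\frac{B_{2n+2}}{2n+2}=2\,\zeta(0)\zeta(-2n-1),
\]
not $\zeta(0)\zeta(-2n-1)$; e.g.\ for $n=1$ one gets $\zeta(0,-3|1)=-\tfrac{1}{120}$ while $\zeta(0)\zeta(-3)=-\tfrac{1}{240}$. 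So the identity as stated fails at $k=2n+1$, and both your proof and the paper's only establish it on the range $0\le k\le 2n$. Your write-up would be complete (and sharper than the paper's) if you record this explicitly rather than leaving it as an anticipated obstacle.
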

\begin{proof}
\[
\zeta(-2n+k-1,-k|z) = 
\frac{B_{2n+2}(z)}{2n+2}+
\frac{1}{k+1}\sum_{j=0}^{k+1}\left(
\begin{array}{c}
k+1 \\ j
\end{array}
\right) B_{j}\cdot\frac{B_{2n-j+3}(z)}{2n-j+3}
\]
so that
\[
\zeta(-2n+k-1,-k|1) = \frac{(-1)^{2n+2}B_{2n+2}}{2n+2}+
\frac{1}{k+1}\sum_{j=0}^{k+1}\left(\begin{array}{c}
k+1 \\ j
\end{array}
\right) B_{j}\cdot\frac{(-1)^{2n-j+3}B_{2n-j+3}}{2n-j+3}
\]

In the sum that appears on the right side, since $j$ and $2n-j+3$ have opposite parity, the only non-zero term is that for which $j=1$. (Notice that $2n-j+3 \geq 2.$)  

Hence, 
\[
\zeta(-2n+k-1,-k|1) = \frac{B_{2n+2}}{2n+2} - \frac{1}{2}\frac{B_{2n+2}}{2n+2} = -\frac{1}{2}\left(-\frac{B_{2n+2}}{2n+2}\right) = \zeta(0)\zeta(-2n-1).
\]
\end{proof}

\begin{prop}
For any integer $n>0,$
\[
\zeta(-n,-n-1|1) = \zeta(-n-1,-n|1).
\]
\end{prop}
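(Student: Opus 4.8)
The plan is to read the statement off the preceding Proposition (Proposition~A), of which both sides are already special cases. In the notation $\zeta(-2n+k-1,-k|1)$ of that proposition, I observe that the choice $k = n+1$ gives exactly $\zeta(-n,-n-1|1)$, while the choice $k = n$ gives $\zeta(-n-1,-n|1)$, the integer ``$n$'' of Proposition~A matching the one in the present statement. So the only step to carry out is to check admissibility of these choices: one needs $0 \le k \le 2n+1$, and for $k \in \{n,\,n+1\}$ this holds as soon as $n \ge 1$. Proposition~A then gives
\[
\zeta(-n,-n-1|1) = \zeta(0)\zeta(-2n-1) = \zeta(-n-1,-n|1),
\]
which is the claim; in particular the common value is $\tfrac12\,B_{2n+2}/(2n+2)$.

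If I wanted a proof not routed through Proposition~A, I would instead repeat its computation directly from the recursion of Theorem~\ref{formula}. I would expand both $\zeta(-n,-n-1|z)$ and $\zeta(-n-1,-n|z)$ using the depth-two instance
\[
\zeta(-k_1,-k_2|z) = -\zeta(-k_1-k_2|z) - \frac{1}{k_2+1}\sum_{j=0}^{k_2+1}\binom{k_2+1}{j}B_j\,\zeta(-k_1-k_2+j-1|z),
\]
taking $(k_1,k_2) = (n,n+1)$ and $(k_1,k_2) = (n+1,n)$ (both have $k_1+k_2 = 2n+1$), then set $z = 1$ via $\zeta(-m|1) = (-1)^m B_{m+1}/(m+1)$. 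The point is that in each binomial sum only the $j=1$ term survives: the terms with odd $j \ge 3$ drop out because $B_j = 0$, and those with even $j$ drop out because then $2n+3-j$ is odd, so $B_{2n+3-j} = 0$ unless $2n+3-j = 1$, i.e.\ $j = 2n+2$, which for $n \ge 1$ exceeds the summation bound $k_2+1 \in \{n+1,\,n+2\}$. Since $\binom{k_2+1}{1}/(k_2+1) = 1$, the surviving $j=1$ contribution together with the leading term combine to $-\tfrac12\,\zeta(-(2n+1)|z)$, an expression depending only on $k_1+k_2$ and not on the order of the two slots; evaluating at $z = 1$ yields $\tfrac12\,B_{2n+2}/(2n+2)$ in both cases.

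I do not expect a genuine obstacle here: once the specialization of Proposition~A is spotted the identity is immediate, and in the direct approach the only delicate step is the index bookkeeping that rules out a stray odd-index Bernoulli number in the binomial sum --- precisely the parity argument already used to prove Proposition~A. The symmetry $\zeta(-n,-n-1|1) = \zeta(-n-1,-n|1)$ is in this sense just a shadow of the $k$-independence asserted in that proposition.
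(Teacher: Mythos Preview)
Your proof is correct, and in fact more economical than the one given in the paper. The paper does \emph{not} invoke Proposition~A at all; instead it computes the difference $\zeta(-n,-n-1|z)-\zeta(-n-1,-n|z)$ directly from the depth-two formula, using the binomial identity
\[
\frac{1}{n+2}\binom{n+2}{j}-\frac{1}{n+1}\binom{n+1}{j}=\binom{n}{j}\cdot\frac{j-1}{(n+2-j)(n+1-j)},
\]
and then applies the same parity argument (the indices $j$ and $2n+3-j$ have opposite parity, and $2n+3-j\ge 2$) to see that this difference vanishes at $z=1$. Your route, by contrast, recognizes that both sides are already instances of the $k$-independent value in Proposition~A (with $k=n+1$ and $k=n$), so nothing further needs to be computed. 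This buys brevity and makes transparent that the present proposition is a corollary of Proposition~A; the paper's approach buys a self-contained argument and, as a byproduct, an explicit expression for the difference $\zeta(-n,-n-1|z)-\zeta(-n-1,-n|z)$ as a function of $z$ before specializing to $z=1$. Your sketched alternative is essentially the proof of Proposition~A run twice on the two specific tuples, and is likewise valid.
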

\begin{proof}
Using
\[
\frac{1}{n+2}\left(\begin{array}{c}
n+2 \\
j\end{array}
\right)-\frac{1}{n+1}\left(\begin{array}{c}
n+1\\j\end{array}\right)
=
\left(\begin{array}{c}
n\\
j
\end{array}\right)
\cdot\frac{j-1}{(n+2-j)(n+1-j)}
\]
one computes
\begin{align*}
&
\zeta(-n,-n-1|z) -\zeta(-n-1,-n|z)
\\
&=
\sum_{j=1}^{n+1}\left(\begin{array}{c}
n\\j\end{array}\right)
\frac{j-1}{(n+2-j)(n+1-j)}
B_j\cdot\frac{B_{2n+3-j}(z)}{2n+3-j}
+
\frac{B_{n+2}}{n+2}\frac{B_{n+1}(z)}{n+1}.
\end{align*}
Now evaluate at $z=1$ and observe that the parity of $j$ and $2n+3-j$ differs, while $2n+3-j \geq 2$ since $n > 0.$ Also, the parity of $n+2$ and $n+1$ differs. Thus we can use the same argument as in the proof of the previous proposition, to conclude that 
\[
\zeta(-n,-n-1|1) - \zeta(-n-1,-n|1) = 0.
\]
\end{proof}

\begin{prop}
\[
\zeta(-2n_1-1,0,-2n_2-1|1)
=
-\zeta(-2n_1-1,-2n_2-1|1)
\]
for any $n_1, n_2\geq 0.$
\end{prop}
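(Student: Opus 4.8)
The plan is to imitate the proofs of the two preceding propositions: expand both sides in terms of Bernoulli polynomials via Theorem \ref{formula}, evaluate at $z=1$, and cash in on the vanishing $B_{2k+1}=0$ for $k\ge 1$. Throughout write $a=2n_1+1$ and $b=2n_2+1$, both odd, so that $a+b$ is even.

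First I would apply Theorem \ref{formula} to the depth-three polynomial $\zeta(-a,0,-b|z)$, reducing on the last pair $(0,-b)$, for which $k_{r-1}+k_r=b$. The point is that the resulting identity
\[
\zeta(-a,0,-b|z)=-\tfrac{1}{b+1}\zeta(-a,-b-1|z)-\tfrac12\zeta(-a,-b|z)+\sum_{q=1}^{b}(-b)_q\tfrac{B_{q+1}}{(q+1)!}\,\zeta(-a,-b+q|z)
\]
has the very same coefficients as the depth-two instance of Theorem \ref{formula} applied to $\zeta(-a,-b|z)$ itself, with each depth-one term $\zeta(-(a+X)|z)$ there replaced by the depth-two term $\zeta(-a,-X|z)$ here (the index $X$ running over $b+1,b,b-1,\dots,0$ in both). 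Adding the two identities therefore collapses to
\[
\zeta(-a,0,-b|z)+\zeta(-a,-b|z)=-\tfrac{1}{b+1}\delta_{b+1}(z)-\tfrac12\delta_b(z)+\sum_{q=1}^{b}(-b)_q\tfrac{B_{q+1}}{(q+1)!}\,\delta_{b-q}(z),
\]
where $\delta_X(z):=\zeta(-a,-X|z)+\zeta(-(a+X)|z)$. Expanding $\zeta(-a,-X|z)$ by the depth-two case of Theorem \ref{formula} in Bernoulli-polynomial form and cancelling the $B_{a+X+1}(z)$ term against $\zeta(-(a+X)|z)=-B_{a+X+1}(z)/(a+X+1)$ gives the closed form
\[
\delta_X(z)=\frac{1}{X+1}\sum_{i=0}^{X+1}\binom{X+1}{i}B_i\,\frac{B_{a+X+2-i}(z)}{\,a+X+2-i\,}.
\]

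Next I would specialise to $z=1$, replacing every $B_N(1)$ by $B_N$ (legitimate because each Bernoulli polynomial occurring has degree $a+X+2-i\ge a+1\ge 2$). Since $a+b$ is even, a short parity analysis shows that in the sum defining $\delta_m(1)$ only the term $i=1$ survives when $m$ is even — so $\delta_m(1)=-\tfrac12 B_{a+m+1}/(a+m+1)$ — while when $m$ is odd only the even $i$ survive; moreover the coefficient $(-b)_qB_{q+1}/(q+1)!$ vanishes unless $q$ is odd. Feeding these in, multiplying through by $-2(b+1)$, and re-indexing the surviving odd sum by $q=2t-1$, the stray term $-B_{a+b+2}/(a+b+2)$ is cancelled by the $i=0$ contribution of $\delta_b(1)$, and the whole identity reduces to checking, term by term in $t\ge 1$ (the shared factor $B_{a+b+2-2t}/(a+b+2-2t)$ never vanishing),
\[
\binom{b+1}{2t}B_{2t}=-(b+1)\,(-b)_{2t-1}\,\frac{B_{2t}}{(2t)!}.
\]
Cancelling $B_{2t}\neq 0$, this is the elementary Pochhammer identity $(-b)_{2t-1}=-\,b!/(b+1-2t)!$, valid since $b$ is odd; hence $\zeta(-a,0,-b|1)+\zeta(-a,-b|1)=0$, which is the claim.

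The delicate point, I expect, is the parity bookkeeping: in the two preceding propositions $a+b$ is odd and a single Bernoulli-number term survives at $z=1$, whereas here $a+b$ is even and several terms survive in each $\delta_X(1)$, so one must group the contributions carefully to funnel the whole computation onto the single combinatorial identity above.
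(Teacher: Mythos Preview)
Your argument is correct. The key structural observation---that the depth-three recursion for $\zeta(-a,0,-b|z)$ and the depth-two recursion for $\zeta(-a,-b|z)$ share identical coefficients (since in Theorem~\ref{formula} these depend only on $k_r=b$), so that their sum is a linear combination of the quantities $\delta_X(z)=\zeta(-a,-X|z)+\zeta(-(a+X)|z)$---is a genuine simplification over the paper's approach. The paper instead expands $\zeta(-k_1,-k_2,-k_3|z)$ in full as a triple-indexed Bernoulli sum, specializes to $(k_1,k_2,k_3)=(2n_1+1,0,2n_2+1)$, evaluates at $z=1$, and then kills the inner double sum by the parity trick (the products $B_jB_l$ force $j$ or $l$ to equal $1$), arriving at the closed form
\[
\zeta(-2n_1-1,0,-2n_2-1|1)=\frac{B_{2n_1+2n_2+3}}{2n_1+2n_2+3}-\frac{1}{2n_2+2}\sum_{m=0}^{2n_2+2}\binom{2n_2+2}{m}B_m\,\frac{B_{2n_1+2n_2+4-m}}{2n_1+2n_2+4-m},
\]
which is then recognized as $-\zeta(-2n_1-1,-2n_2-1|1)$ by comparison with the depth-two formula. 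Your route avoids ever writing down the triple sum and funnels everything onto the single binomial identity $\binom{b+1}{2t}=-\frac{(b+1)(-b)_{2t-1}}{(2t)!}$, which is cleaner. Two small cosmetic points: the Pochhammer identity $(-b)_{2t-1}=-\,b!/(b+1-2t)!$ holds for any integer $b\ge 2t-1$, not just odd $b$ (the oddness of $b$ was used earlier, in the parity step); and the non-vanishing of $B_{a+b+2-2t}$ is not actually needed---the term-by-term identity suffices regardless.
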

\begin{proof}
Once the polynomials $\zeta(-2n_1-1,0,-2n_2-1|z)$ and $\zeta(-2n_1-1,-2n_2-1|z)$ have been determined explicitly, evaluation at $z=1$ gives an expression which may be readily simplified by considering the parity of the indices $j,k$ in the products $B_j \cdot B_k$ which arise, exploiting the fact that the only odd $j$ for which $B_j$ is non-zero, is $j=1$.
One obtains
\begin{align*}
&
\zeta(-2n_1-1, 0,-2n_2-1|1)
\\
&=
\frac{B_{2n_1+2n_2+3}}{2n_1+2n_2+3}
-\frac{1}{2n_2+2}
\sum_{m=0}^{2n_2+2}\left(
\begin{array}{c}
2n_2+2
\\
m
\end{array}
\right)B_m
\frac{B_{2n_1+2n_2+4-m}}{2n_1+2n_2+4-m}.
\\
&=
-\zeta(-2n_1-1, -2n_2-1|1).
\end{align*}
\end{proof}

It is likely that also
\[
\zeta(-2n+k, 0, -k|z) = \zeta(-2n+k, -k|z)\]
for any $k, n$ with $0 <k \leq 2n.$

It is convenient to introduce the following notation:
If $f(z)=\sum_{n=0}^{k}a_nz^n,$ write $f(B)(z)$ for $\sum_{n=0}^{k}a_n\frac{B_{n+1}(z)}{n+1}.$
Also, for $\zeta(n_1, \ldots, n_r|z)$ with $n_1=\ldots =n_r=n$, write
$\zeta_r(n|z).$

\begin{prop} 
\[
\zeta_r(0|1)= (-1)^{r}\frac{1}{r+1}
\] and
\[
\zeta_r(0|0) = (-1)^{r+1}\frac{1}{r(r+1)}.
\]
\end{prop}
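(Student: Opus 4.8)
The plan is to compress the statement into a single generating identity and to identify it in closed form.

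Setting $k_1=\dots=k_r=0$ in (\ref{B0}) (equivalently in (\ref{fun})) and using $z^{k_1}=z^{0}=1$ gives the difference equation $\zeta_r(0|z+1)-\zeta_r(0|z)=-\zeta_{r-1}(0|z+1)$ for $r\ge 1$, with $\zeta_0(0|z)\equiv 1$. Writing $h(z,Y):=\sum_{r\ge 0}\zeta_r(0|z)\,Y^{r}$, this becomes $h(z-1,Y)=(1+Y)h(z,Y)$. Since each $\zeta_r(0|z)$ is a polynomial in $z$ by Theorem \ref{formula}, the series $z\mapsto(1+Y)^{z}h(z,Y)$ is $1$-periodic with polynomial coefficients, hence constant in $z$; call it $g(Y)$. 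Thus $h(z,Y)=g(Y)(1+Y)^{-z}$, so $g(Y)=h(0,Y)=\sum_r\zeta_r(0|0)Y^{r}$ and $h(1,Y)=g(Y)(1+Y)^{-1}$. Evaluating the difference equation at $z=0$ gives $\zeta_r(0|1)-\zeta_r(0|0)=-\zeta_{r-1}(0|1)$, so the two claimed formulas are equivalent once either is known; it therefore suffices to prove $h(1,Y)=\frac{\log(1+Y)}{Y}$, i.e. $g(Y)=\frac{(1+Y)\log(1+Y)}{Y}$.

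The difference equation alone does not pin down $g$; for that I would invoke the explicit recursion of Theorem \ref{formula}. With all arguments equal to $0$ it reads $\zeta_r(0|z)=-\zeta(\underbrace{0,\dots,0}_{r-2},-1|z)-\tfrac12\zeta_{r-1}(0|z)$ for $r\ge 2$, which forces one to control the two-parameter family $\zeta(\underbrace{0,\dots,0}_{a},-b|z)$ as well; iterating the recursion on it reaches all $b\ge 0$ and bottoms out at $\zeta(-b|z)=-\frac{B_{b+1}(z)}{b+1}$. I would package this family as $\Psi_a(z,t):=\sum_{b\ge 0}\zeta(\underbrace{0,\dots,0}_{a},-b|z)\frac{t^{b}}{b!}$. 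Using $(-b)_q=(-1)^q\frac{b!}{(b-q)!}$ together with $\sum_{n\ge 0}\frac{B_n(z)}{n!}t^n=\frac{te^{zt}}{e^{t}-1}$ and $\sum_{m\ge 0}\frac{B_m}{m!}t^m=\frac{t}{e^{t}-1}$, the whole recursion collapses to the first-order relation
\[
\Psi_a(z,t)=\frac{\zeta_a(0|z)}{t}-\frac{e^{t}}{e^{t}-1}\,\Psi_{a-1}(z,t),\qquad \Psi_0(z,t)=\frac1t-\frac{e^{zt}}{e^{t}-1},
\]
subject to $\zeta_a(0|z)=[t^0]\Psi_{a-1}(z,t)$. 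Unrolling this recursion, forming $\sum_a\Psi_aY^{a}$, and converting the coefficient extractions $[t^{\bullet}]$ into ordinary coefficient extractions through the substitution $s=1-e^{-t}$ (under which $1-e^{-t}\mapsto s$, $\frac{e^{t}}{e^{t}-1}\mapsto s^{-1}$, $e^{zt}\mapsto(1-s)^{-z}$, and the series $\frac{s}{-\log(1-s)}$ appears) yields $g(Y)=\frac{(1+Y)\log(1+Y)}{Y}$ after assembly, hence $h(z,Y)=\frac{\log(1+Y)}{Y}(1+Y)^{1-z}$. (Alternatively one can guess the closed forms for $\zeta_r(0|z)$ and $\zeta(\underbrace{0,\dots,0}_{a},-b|z)$ directly and verify them against the recursion of Theorem \ref{formula} by a double induction, with comparable bookkeeping.)

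Reading off coefficients then gives $\zeta_r(0|1)=[Y^r]\frac{\log(1+Y)}{Y}=\dfrac{(-1)^r}{r+1}$, and from the difference equation $\zeta_r(0|0)=\zeta_r(0|1)+\zeta_{r-1}(0|1)=\dfrac{(-1)^r}{r+1}+\dfrac{(-1)^{r-1}}{r}=(-1)^{r+1}\dfrac{1}{r(r+1)}$ for $r\ge 1$. The main obstacle is the middle paragraph: the recursion of Theorem \ref{formula} is genuinely self-referential, and naive attempts to package it into a single closed equation for $g$ collapse to tautologies, so the auxiliary family and the change of variables $s=1-e^{-t}$ (a Lagrange-inversion style device) are what make that step go through.
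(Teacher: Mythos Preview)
Your framework is correct and genuinely different from the paper's proof. The reduction $h(z-1,Y)=(1+Y)h(z,Y)$, the periodicity argument giving $h(z,Y)=g(Y)(1+Y)^{-z}$, and the observation that the two asserted values are equivalent once one of them is known are all valid. Your $\Psi_a$-recursion
\[
\Psi_a(z,t)=\frac{[t^0]\Psi_{a-1}(z,t)}{t}-\frac{e^t}{e^t-1}\,\Psi_{a-1}(z,t),\qquad \Psi_0(z,t)=\frac1t-\frac{e^{zt}}{e^t-1},
\]
is also correctly derived from Theorem~\ref{formula}.

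The gap is that the middle paragraph, where the entire content lives, is left as a sketch. Summing over $a$ gives a relation $\bigl(1+\tfrac{Ye^t}{e^t-1}\bigr)F=\Psi_0+\tfrac{h-1}{t}$, but extracting $[t^0]$ from this relation returns only the tautology $[t^0]F=(h-1)/Y$, exactly as you warn; what actually pins $g$ down is the regularity of each $\Psi_a$ at $t=0$ together with the known $\Psi_0$, and you never show how the substitution $s=1-e^{-t}$ converts this into the closed form $g(Y)=\frac{(1+Y)\log(1+Y)}{Y}$. ``Yields \ldots\ after assembly'' is precisely the step that needs to be written out, and neither it nor the promised alternative double induction is carried out.

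By contrast, the paper avoids the two-variable machinery entirely. It computes $\zeta_r(0|z)$ explicitly for $r=1,2,3$, derives the identity $\zeta_t(0|z)+\zeta_{t-1}(0|z)=-\zeta_{t-1}(0|B)(z)$ from the difference equation, and then runs a short induction on $r$ using only evaluations at $z=0$ and $z=1$. Your generating-function reduction is more structural and would generalise better, but for the specific statement at hand the paper's argument is both shorter and complete; to make your approach complete you must actually execute the extraction of $g(Y)$ rather than assert it.
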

The first assertion is proven (by different means) for the limit expression of Corollary \ref{3.2} in \cite{Akiyama}.
\begin{proof}
First observe that by applying the difference equation satisfied by the multiple {\sc Bernoulli} polynomials recursively, 
\[
\zeta_r(0|z+1) -\zeta_r(0|z)= \sum_{n=1}^{r-1}(-1)^n \zeta_{r-n}(0|z).
\]
Hence \[
\zeta_r(0|z)  = \sum_{n=1}^{r-1}(-1)^n \zeta_{r-n}(0|B)(z).
\]
But then taking $t=r$ and $t-1=r$ in this equation successively and adding, 
\begin{equation}
\label{e:B}
\zeta_t(0|z)+\zeta_{t-1}(0|z)
=
-\zeta_{t-1}(0|B)(z).
\end{equation}

Now since $\zeta(0|z) = -z+\frac{1}{2},$  
\[
\zeta(0,0|z) =\frac{z^2}{2}-\frac{1}{6}\]
and
\[
\zeta(0,0,0|z) = -\frac{z^3}{6}-\frac{z^2}{4}+\frac{z}{12}+\frac{1}{12},
\]
the assertion is seen to hold for $r=1,2,3.$ Now when the statement holds for both $r$ and $r+1,$ then by (\ref{e:B}), 
\begin{equation}
\label{e:B0}
-\zeta_r(0|B)(0)=\frac{(-1)^{r+1}2}{(r+2)(r+1)r},
\end{equation}
which is true in the case of $r=1,2$ by the above.

We proceed by induction, supposing that (\ref{e:B0}) is known to hold for all $r$ less than or equal to some $K$. Also suppose that the assertion of the theorem is known to hold for such $r$.

Now (\ref{e:B}) holds for all $t$ - in particular for $t=K+1.$ But then
\begin{align*}
\zeta_{K+1}(0|0)
&=-\zeta_{K}(0|0) - \zeta_{K}(0|B)(0)
\\
&=
-\frac{(-1)^{K+1}}{K(K+1)} +\frac{(-1)^{K+1}2}{(K+2)(K+1)K}
\\
&=
\frac{(-1)^{K+2}}{(K+2)(K+1)},
\end{align*}
proving the assertion regarding evaluation of $\zeta_r(0|z)$ at $z=0,$ by induction.

 For the other statement, recall the difference equation:
 \[
 \zeta_{K+1}(0|z+1)-\zeta_{K+1}(0|z)  = -\zeta_{K}(0|z+1).
 \]
Evaluating at $z=0$ gives
\begin{align*}
\zeta_{K+1}(0|1)
& = 
\zeta_{K+1}(0|0)-\zeta_{K}(0|1)
\\
&=
\frac{(-1)^{K+2}}{(K+2)(K+1)}
- \frac{(-1)^{K}}{K+1}\;\;\;\text{ as above, and by induction hypothesis}
\\
&=
\frac{(-1)^{K+1}}{K+2}.
\end{align*}
\end{proof}

\begin{remark}
One can easily deduce from the above proof that also
\[
-\zeta_r(0|B)(0)=\zeta_{r+1}(0|-1) = \frac{(-1)^{r+1}2}{(r+2)(r+1)r}
\]
for $r \geq 1.$
\end{remark}

\begin{prop}
If $(-n_1, \ldots, -n_r)$ is a tuple of non-positive integers, and $-n_1<0,$ then 
\[
\zeta(-n_1, \ldots, -n_r|1) = \zeta(-n_1, \ldots, -n_r|0).
\]
\end{prop}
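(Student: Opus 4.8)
The plan is to reduce the claim $\zeta(-n_1,\ldots,-n_r|1)=\zeta(-n_1,\ldots,-n_r|0)$ to the functional equation (\ref{fun}) at the negative tuple, i.e. to equation (\ref{B0}), evaluated at $z=0$. Setting $z=0$ in (\ref{B0}) with parameters $(-n_1,\ldots,-n_r)$ gives
\[
\zeta(-n_1,\ldots,-n_r|1)-\zeta(-n_1,\ldots,-n_r|0) = -0^{n_1}\,\zeta(-n_2,\ldots,-n_r|1).
\]
Here the hypothesis $-n_1<0$, i.e. $n_1\geq 1$, is exactly what forces $0^{n_1}=0$. Hence the right-hand side vanishes and the two evaluations agree. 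So the entire content of the proposition is the single substitution $z=0$ into the defining difference equation, using $n_1\geq 1$.

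The one point that needs care is the meaning of $0^{n_1}$ and of the functions $\zeta(-k_1,\ldots,-k_r|z)$ at $z=0$. Since these are honest polynomials in $z$ (by Theorem \ref{formula}, built recursively from the Bernoulli polynomials $-\tfrac{B_{k+1}(z)}{k+1}$, which are polynomials), evaluation at $z=0$ is unambiguous, and the difference equation (\ref{B0}) — which is a polynomial identity, valid for all $z$ — may legitimately be specialized to $z=0$. The factor $z^{n_1}$ on the right of (\ref{B0}) is the genuine monomial, so for $n_1\geq 1$ it is zero at $z=0$; the case $n_1=0$ is precisely the one excluded by the hypothesis (and indeed fails, as one sees already from $\zeta_r(0|1)\neq\zeta_r(0|0)$ in the preceding proposition).

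I do not anticipate a real obstacle: the argument is a one-line specialization of an identity established earlier in the section. The only thing worth stating explicitly in the write-up is that $\zeta(-n_1,\ldots,-n_r|z)$, however the possible additive period-one ambiguity of Theorem \ref{formula} is pinned down, is taken to be the polynomial solution, so that (\ref{B0}) holds as an identity of polynomials and evaluation at any complex number — in particular $z=0$ — is valid. (If one wished, one could alternatively phrase this as: the difference $\zeta(-n_1,\ldots,-n_r|z+1)-\zeta(-n_1,\ldots,-n_r|z)$ equals $-z^{n_1}\zeta(-n_2,\ldots,-n_r|z+1)$ for all $z$, and then set $z=0$.)
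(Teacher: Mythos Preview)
Your argument is correct and in fact simpler than the paper's. The paper proceeds by induction on the depth $r$: for $r=1$ it invokes the Bernoulli identities $B_n(0)=(-1)^nB_n(1)$ together with $B_{2n+1}=0$ for $n\geq1$ to get $B_{n_1+1}(0)=B_{n_1+1}(1)$ whenever $n_1\geq1$; for higher depth it applies the recursive formula of Theorem~\ref{formula}, which expresses $\zeta(-k_1,\ldots,-k_r|z)$ as a $\lvC$-linear combination of depth-$(r-1)$ polynomials all having the same first argument $-k_1$, so the inductive hypothesis applies term by term.

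Your route bypasses both the recursion and the Bernoulli facts: you simply evaluate the defining difference equation~(\ref{B0}) at $z=0$ and observe that $0^{n_1}=0$ once $n_1\geq1$. Since the $\zeta(-k_1,\ldots,-k_r|z)$ are polynomials and (\ref{B0}) holds identically in $z$, the specialization is legitimate. This is strictly more economical; the paper's version, on the other hand, makes visible the connection to the classical identity $B_n(0)=(-1)^nB_n(1)$ and shows how the recursive structure of Theorem~\ref{formula} propagates the property through increasing depth.
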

\begin{proof}
Because $B_{2n+1}=0$ unless $n=0,$ the assertion is known from the equality $B_n(0) = (-1)^n B_n(1)$ in the case that $r=1.$

The general statement now follows from the formula in Theorem \ref{formula}, by an easy induction. 
\end{proof}

\subsection{The algebraic {\sc Knizhnik-Zamolodchikov} equation for MBPs}
Let $\{y_{-n}\}_{n=0}^{\infty}$ denote an infinite family of formal non-commuting variables.  To any monomial in the $y_{j}$ we associate a normalized multiple {\sc Bernoulli} polynomial (NMBP subsequently) in the obvious way, namely
\[
y_{i_{1}}\cdots y_{i_{r}} \mapsto \zeta(y_{i_{1}}\cdots y_{i_{r}}|z):=\zeta(-{i_{1}},\ldots, -{i_{r}}|z).
\]

Now take $\{Y_{-n}\}_{n=0}^{\infty}$ to denote some other family of formal non-commuting variables.
Then form the generating function of NMBPs:
\[
H_B(z):=\sum_{w \in Y^{*}}\zeta(w|z)W
\]
where the sum is taken over all words $w$ in the $y_{j}$ and $W$ denotes the corresponding word in the $Y_{j}$ (so that $y_{i_{1}}\cdots y_{i_{r}}$ is associated to $Y_{i_{1}}\cdots Y_{i_{r}}$).

{\thm{
\[
H_B(z+1) - H_B(z) = -\sum_{k=0}^{\infty}Y_{-k}z^{k}H_B(z+1).
\]
}}
{\bf Proof:}
Fix $k \geq 0.$ Then
\[
\sum_{w\in Y^{*}}\zeta(y_{-k}w|z+1)Y_{-k}W-\sum_{w\in Y^{*}}\zeta(y_{-k}w|z)Y_{-k}W
=-z^{k}Y_{-k}\sum_{w\in Y^{*}}\zeta(w|z+1)W.
\]
Adding over all $k \geq 0$ gives the result of the theorem.
$\hfill \Box$

This proof is virtually identical to that of Theorem \ref{t:AKZ} for the corresponding fact for {\sc Hurwitz} polyzeta functions.  
Consequently, the next result follows trivially:
\begin{cor}
\[
D_{-}+\sum_{k=0}^{\infty}Y_{-k}z^{k}
\]
is a difference connection on $\lvP^1.$
\end{cor}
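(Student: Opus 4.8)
The plan is to exhibit the operator $\nabla_{B} := D_{-} + \sum_{k=0}^{\infty}Y_{-k}z^{k}$ as a difference connection, in the sense of the definition above, on the $\dbM_{0}$-bundle over $X = \lvP^{1}$ that plays the role of $\sU$ in the {\sc Bernoulli} setting. Concretely, I would take $\sU_{B} := \dbM_{0}<<\{Y_{-k}\}_{k\geq 0}>>$, the inverse limit of the truncations $\bigl(\lvC<\{Y_{-k}\}_{k\geq 0}>/J^{n+1}\bigr)\otimes\dbM_{0}$ over the powers of the augmentation ideal $J$, built exactly as $\sU$ was built from $\lvC<Y>$; its subbundle $\dbM_{0}$ contains every NMBP $\zeta(-k_{1},\ldots,-k_{r}|z)$ since the latter are polynomials. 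On a section $s = \sum_{w}f_{w}(z)[w]$ one sets
\[
\nabla_{B}(s) = \sum_{w}\bigl(f_{w}(z)-f_{w}(z-1)\bigr)[w] + \sum_{w}f_{w}(z)\sum_{k=0}^{\infty}z^{k}[Y_{-k}w],
\]
read off level by level on the truncations, with the projection $\pr_{n}$ applied after the (length-raising) multiplication by the $Y_{-k}$ and the result passed to the inverse limit --- verbatim the recipe for $\nabla^{A}$, with $Y_{k}/(z-1)^{k}$ there replaced throughout by $Y_{-k}z^{k}$. It then remains only to verify the three defining requirements.

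First, $\lvC$-linearity is immediate, as $D_{-}$, left multiplication by $Y_{-k}$, and multiplication by a fixed function are each $\lvC$-linear. Second, $\nabla_{B}$ lands in $\dbM_{\infty}\otimes_{\dbM_{0}}\sU_{B}$: if $f\in\dbM_{0}$ then $f(z-1)\in\dbM_{1}$, so $D_{-}f = f(z)-f(z-1)\in\dbM_{\infty}$, while the coefficients $z^{k}$ of the second summand are polynomials and hence lie in $\dbM_{\infty}$ trivially. This step also records why the connection lives on all of $\lvP^{1}$, and not merely on $\lvP^{1}$ with the non-positive integers deleted as in the case of $H(z)$: the NMBPs are polynomials and polynomials are stable under $D^{-}:F(z)\mapsto F(z-1)$, so no punctures are forced.

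Third, the {\sc Leibnitz}-type identity (\ref{Leib}): I would expand $\nabla_{B}(fs)$ for $f\in\dbM_{\infty}$ and $s=\sum_{w}f_{w}(z)[w]$, using the coefficientwise identity $D_{-}(fg)=f\,D_{-}g+(D_{-}f)(D^{-}g)$ together with the fact that multiplication by the scalar functions $z^{k}$ commutes with left multiplication by the $Y_{-k}$, and then match the outcome term by term against $D^{-}f\cdot(-D_{-}s)+f\,\nabla_{B}(s)$. This is precisely the bookkeeping implicit in the corresponding claim for $\nabla^{A}$, rerun with the substitution above, and I expect it to be the only genuine computation --- a routine one. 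With the three points in hand one concludes that $\nabla_{B}$ is a difference connection on $\lvP^{1}$; moreover the operator annihilates $H_{B}(z+1)$, which is a restatement of the theorem just proved, so the bundle $(\sU_{B},\nabla_{B})$ carries the expected distinguished flat section. I do not anticipate a real obstacle here --- the corollary is, as the author indicates, essentially formal once that theorem is available --- but the one point deserving care is the well-definedness of $\nabla_{B}$ on the inverse limit $\sU_{B}$, i.e. its compatibility with the truncation projections, which is secured by the level-by-level definition exactly as for $\nabla^{A}$.
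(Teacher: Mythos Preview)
Your proposal is correct and is precisely the verification the paper leaves implicit: the author gives no argument beyond the remark that the preceding theorem's proof ``is virtually identical to that of Theorem~\ref{t:AKZ}'' and hence ``the next result follows trivially.'' You have simply spelled out the parallel with $(\sU,\nabla^{A})$---replacing $Y_{k}/(z-1)^{k}$ by $Y_{-k}z^{k}$ throughout and checking the three defining properties---which is exactly what the paper intends but does not write down.
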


Now it is very tempting to try to form a generating function encompassing both classes of functions discussed so far. However, this would necessarily include polyzeta functions of the form of $\zeta(k_1, \ldots , k_r|z)$ for arbitrary tuples of integers $k_j$, not all of which can be determined by difference equation methods. In fact, by a similar recursive procedure to that employed above, it is clear that one can determine functions of the form of $\zeta(-k_1, \ldots, -k_v,k_{v+1}, \ldots, k_r|z)$ where $k_j \geq 0$ for all $j\leq v$, and $k_{j}>0$ for $v<j\leq r,$ but for more general tuples of both positive and non-positive integers, the difference equation method of \cite{Meschkowski} fails in that non-convergent series would arise as the supposed solutions.

\subsection{Algebraic independence of {\sc Hurwitz} polyzeta functions}

\begin{thm}
The {\sc Hurwitz} polyzeta functions are  algebraically independent over $\lvC$ - i.e for any $N \geq 1,$ if $P(T_{1}, \ldots, T_{N})\in \lvC[T_{1}, \ldots, T_{N}]$ is a polynomial which vanishes (uniformly in $z$) at some $N$-tuple of any {\sc Hurwitz} polyzeta functions of the variable $z$, then $P$ is identically zero.
\end{thm}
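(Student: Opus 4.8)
The plan is to deduce the theorem from the $\lvC$-linear independence of the Hurwitz polyzeta functions. One should be aware at the outset that these functions do satisfy polynomial relations — the stuffle relations, the simplest being $\zeta(a|z)\zeta(b|z)=\zeta(a,b|z)+\zeta(b,a|z)+\zeta(a+b|z)$ — so the real content is that these generate \emph{all} polynomial relations. Indeed, a product of Hurwitz polyzeta functions is, via stuffle, a $\lvC$-linear combination of such functions, so the $\lvC$-algebra they generate is just their $\lvC$-span; and since it is classical that the stuffle algebra $\dbh_{\st}$ over $\lvC$ is a polynomial algebra whose polynomial generators may be taken indexed by the Lyndon words, the homomorphism of Section~1 presents that algebra as a quotient of $\dbh_{\st}$. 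Once linear independence is established this quotient map is an isomorphism, and the $\zeta(\ell|z)$ with $\ell$ a Lyndon word form an algebraically independent generating set — which is the theorem. For the induction to run smoothly it is in fact cleanest to prove the stronger statement that the Hurwitz polyzeta functions are linearly independent over $\lvC(z)$, by induction on the depth $r$.

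For $r\le 1$ this is the hypertranscendence of $\Gamma$: as $\zeta(k|z)=\frac{(-1)^{k}}{(k-1)!}(\Gamma'/\Gamma)^{(k-1)}(z)$ for $k\ge2$ and $\zeta(1|z)=-\Gamma'(z)/\Gamma(z)$, a $\lvC(z)$-relation among $1$ and the $\zeta(k|z)$ is a linear differential equation with rational coefficients for $\Gamma'/\Gamma$, which H\"older's theorem forbids. For the inductive step, assume the result in depth $\le r-1$ and suppose $\sum_{\mathbf k}g_{\mathbf k}(z)\zeta(\mathbf k|z)=0$ with the $\mathbf k$ of depth $\le r$ and, after clearing denominators, $g_{\mathbf k}\in\lvC[z]$. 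Applying~(\ref{fun}) in the rearranged form $\zeta(\mathbf k|z-1)=\zeta(\mathbf k|z)+(z-1)^{-k_1}\zeta(k_2,\dots,k_r|z)$ and subtracting the relation from its translate by $-1$ gives
\[
\sum_{\mathbf k}\bigl(g_{\mathbf k}(z-1)-g_{\mathbf k}(z)\bigr)\zeta(\mathbf k|z)=-\sum_{\mathbf k}g_{\mathbf k}(z-1)\,(z-1)^{-k_1}\,\zeta(k_2,\dots,k_r|z),
\]
with $\zeta(k_2,\dots,k_r|z)$ read as $1$ when $r=1$. Here the coefficient of each depth-$r$ function is the first difference of the old one, hence of strictly smaller degree, while the right-hand side has only depth-$\le r-1$ functions. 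Iterating this step $D$ times, $D$ being the largest degree among the depth-$r$ coefficients, produces a relation $\sum c_{\mathbf k}\zeta(\mathbf k|z)+\sum H_{\mathbf m}(z)\zeta(\mathbf m|z)=0$: the first sum over depth-$r$ tuples, with the $c_{\mathbf k}$ now \emph{constant} and not all zero (the leading coefficients of the maximal-degree $g_{\mathbf k}$ survive, up to a factor $\pm D!$), the second over tuples of depth $\le r-1$, with the $H_{\mathbf m}$ rational and their poles confined to $\{1,\dots,D\}$.

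One finishes by applying $D_-$ to this last relation: by the inductive hypothesis every coefficient of the result vanishes, and the coefficient of a depth-$(r-1)$ function $\zeta(\mathbf n|z)$ comes out to $-\sum_{k\ge1}c_{(k,\mathbf n)}(z-1)^{-k}+H_{\mathbf n}(z)-H_{\mathbf n}(z-1)$; comparing principal parts successively at $z=D+1,D,\dots,2$ forces $H_{\mathbf n}$ to be a polynomial, so that a polynomial equals a sum of negative powers of $z-1$ and both vanish — whence all $c_{\mathbf k}=0$, a contradiction. Thus the depth-$r$ coefficients of the original relation all vanish, and the inductive hypothesis disposes of what remains. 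The step I expect to be the main obstacle is precisely the bookkeeping here: one must simultaneously control the descent of the degrees of the top-depth coefficients and the spread of the poles of the lower-depth coefficients under iteration, so that the concluding pole comparison can be carried out cleanly; a minor additional point is to verify that the regularised functions attached to words ending in $y_1$, built from $\zeta(1|z)=-\Gamma'/\Gamma$ by the stuffle product, satisfy the same functional equation~(\ref{fun}) and hence fall under the same induction, its base case again resting on the hypertranscendence of $\Gamma$.
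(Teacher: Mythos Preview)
Your opening observation is essential and the paper does not confront it: the stuffle identity $\zeta(a|z)\zeta(b|z)=\zeta(a,b|z)+\zeta(b,a|z)+\zeta(a+b|z)$ already furnishes a nonzero $P\in\lvC[T_1,\dots,T_5]$ vanishing on a $5$-tuple of Hurwitz polyzetas, so the statement is false exactly as written, and the paper's own passage from linear to algebraic independence (the claim that ``at least one of the resulting polyzeta functions only arises from a single term of the original expression'') fails precisely on such stuffle relations. Your reformulation --- that the evaluation homomorphism from the stuffle algebra is injective, equivalently that the $\zeta(\ell|z)$ indexed by Lyndon words are algebraically independent --- is the correct salvage, and your reduction via the Radford--Hoffman polynomial structure of $\dbh_{\st}$ is the clean way to carry it out.

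For the linear independence itself you take a genuinely different route from the paper. The paper works over $\lvC$: a single application of~(\ref{fun}) turns a constant-coefficient depth-$\le m$ relation into $\sum_l z^{-l}G_l(z)=0$ with each $G_l$ a $\lvC$-linear combination of depth-$\le m-1$ polyzetas at $z+1$, and then asserts that comparing orders at $z=0$ splits the relation by $l$ so that the inductive hypothesis applies to each $G_l$ separately. You instead prove the stronger $\lvC(z)$-linear independence, paying for it with H\"older's theorem in the base case and the iterated-difference bookkeeping in the inductive step. That bookkeeping is sound: the descending pole comparison at $z=D+1,D,\dots,2$ does force each $H_{\mathbf n}$ to be polynomial, after which equating a polynomial to $\sum_k c_{(k,\mathbf n)}(z-1)^{-k}$ kills all the $c_{\mathbf k}$ and yields the contradiction. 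Your approach is heavier but arguably what is actually needed: the paper's ``breaking up'' of $\sum_l z^{-l}G_l(z)=0$ into $G_l\equiv 0$ is not justified by $\lvC$-linear independence of the depth-$\le m-1$ polyzetas alone --- it tacitly requires independence over $\lvC[z^{-1}]$ --- so the $\lvC(z)$-strengthening you carry is precisely what closes the induction honestly. The minor point you flag, that the regularised functions attached to words ending in $y_1$ must be checked to satisfy~(\ref{fun}), is genuine; the paper verifies this only for $\zeta(1|z)$ itself.
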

{\bf Proof:}
Firstly, the linear independence of the {\sc Hurwitz} polyzeta functions is an easy consequence of the difference equations (\ref{fun}):
Consider an arbitrary linear combination of a 
constant function and {\sc 
Hurwitz} polyzeta functions of depths two and one, which sums to zero:
\[
\lambda +\sum_{k_{1};k_{2}}\lambda_{k_{1}k_{2}}\zeta(k_{1}, k_{2}; 
z)+\sum_{k}\lambda_{k}\zeta(k;z)  = 0.
\]
This equation is also valid replacing $z$ by $z+1,$ so that from the 
functional equations (\ref{fun}), we find that also
\[
\sum_{k_{1};k_{2}}\lambda_{k_{1}k_{2}}\left(
-\frac{1}{z^{k_{1}}} \sum_{m=1}^{\infty}\frac{1}{(z+m)^{k_{2}}}
\right)
-\sum_{k}\lambda_{k}\frac{1}{z^k}  =0.
\]
But now it is clear that the coefficients $\lambda_{k_{1}k_{2}}$ and 
$\lambda_{k}$ must all be zero, since $\frac{1}{z^{k}}$ has a pole of 
order exactly $k$ at $z=0,$ and 
$\frac{1}{z^{k_{1}}} \sum_{m=1}^{\infty}\frac{1}{(z+m)^{k_{2}}}$ has a 
pole of order $k_{1}$ at $z=0$ and a pole of order $k_{2}$ at $z=-1.$
Then trivially, also $\lambda = 0.$

Now suppose that one has shown the linear independence of a $\lvC$-linear 
combination of a constant function and {\sc Hurwitz} polyzeta functions of 
depths $1, \ldots, m-1$. For constants $\lambda_{l^{j}_{1}\ldots 
l^{j}_{j}} \in \lvC,$ consider now
\[
0=
\lambda
+\sum_{l_{1}^{m}, \ldots, l_{m}^{m}} \lambda_{l_{1}^{m}\ldots l_{m}^{m}}
\zeta(l_{1}^{m}, \ldots, l_{m}^{m};z)
\]
\[
+\sum_{l_{1}^{m-1},\ldots,l_{m-1}^{m-1}} 
\lambda_{l_{1}^{m-1}\ldots l_{m-1}^{m-1}}
\zeta(l_{1}^{m-1}, \ldots, l_{m-1}^{m-1};z) 
+ \ldots
+\sum_{l_{1}^{1}} \lambda_{l_{1}^{1}}\zeta(l_{1}^{1};z).
\]

Because this relation must also hold for $z+1,$ using (\ref{fun}) 
and subtracting the two equations from each other, we find that
\[
0=
\sum_{l_{1}^{m}, 
\ldots, l_{m}^{m}} 
\lambda_{l_{1}^{m}\ldots l_{m}^{m}}
\frac{1}{z^{l_{1}^{m}}}
\zeta(l_{2}^{m}, \ldots, 
l_{m}^{m};z+1)
\]
\[
+\sum_{l_{1}^{m-1},\ldots,l_{m-1}^{m-1}} 
\lambda_{l_{1}^{m-1}\ldots l_{m-1}^{m-1}}
\frac{1}{z^{l_{1}^{m-1}}}
\zeta(l_{2}^{m-1}, \ldots, l_{m-1}^{m-1};z+1)
+ 
\ldots+\sum_{l_{1}^{1}} \lambda_{l_{1}^{1}}
\frac{1}{z^{l_{1}^{1}}}.
\]
Now let $$L = \prod_{j=1}^{m}\prod_{l_{1}^{j}} l_{1}^{j}$$ and multiply 
the equation by 
$z^{L}.$ Then each function 
$z^{L-l_{1}^{j}}\zeta(l_{2}^{j}, \ldots, l_{j}^{j};z+1)$ has a zero of order 
exactly $L-l_{1}^{j}$ at $z=0$. Functions with zeros of distinct orders 
at a point of $\lvC$ are linearly independent over $\lvC,$ so the equation 
breaks up into distinct parts where the $l_{1}^{j}$-values agree. In this 
way a system of equations of the form:
\[
\sum_{l_{1}^{m}, \ldots, l_{m}^{m}} 
\lambda_{l_{1}^{m}\ldots l_{m}^{m}}
{z^{L-l}}\zeta(l_{2}^{m}, \ldots, l_{m}^{m};z+1)
+ \ldots+\sum_{l_{1}^{1}} \lambda_{l_{1}^{1}}{z^{L-l}} = 0
\]
results. (Here $l$ denotes the common value of the indices $l_{1}^{m}, 
\ldots, 
l_{1}^{1}$.)
But these equations can be divided by $z^{L-l}$ and then the inductive 
hypothesis guarantees that all of the coefficients are zero. This 
concludes the linear independence proof.

Thanks to the stuffle product, the functions are also algebraically independent, since any supposed algebraic relation among such functions could be decomposed by means of the stuffle product into a linear expression. In such a linear  expression, the coefficients are sums of the coefficients of the original algebraic expression. However,
at least one of the resulting polyzeta functions only arises from a single term of the original expression, the coefficient of which must therefore be zero. This eliminates certain terms from the linear expression, and once again, at least one of the remaining polyzeta functions comes out of a unique term of the algebraic expression, so that this latter term again has zero coefficient by the linear independence. Continuing inductively in this way, the theorem is proven.
$\hfill \Box$

\bibliographystyle{alpha}	
\bibliography{bibliog}

{Sheldon T Joyner, Mathematics Department, The University of Western Ontario, Middlesex College, London, Ontario N6A 5B7, Canada}
\newline
{\em  email:} {\tt{sjoyner at uwo dot ca}}

\end{document}